\title{On the number of residues of certain\\ second-order linear recurrences%
    \footnote{%
        The authors are members of CrypTO, the group of Cryptography and Number Theory of the \mbox{Politecnico di Torino.}
        C.~Sanna is a member of GNSAGA of INdAM.
}}
\author{Federico Accossato}
\author{Carlo Sanna}
\affil{%
    Department of Mathematical Sciences, Politecnico di Torino\\
    Corso Duca degli Abruzzi 24, 10129 Torino, Italy
}
\affil[]{\texttt{\{federico.accossato,carlo.sanna\}@polito.it}}
\date{} 
\setlist[enumerate]{label=(\roman*),labelindent=1em,itemsep=0.5em,topsep=0.5em}
\newtheorem{theorem}{Theorem}[section]
\newtheorem{corollary}{Corollary}[section]
\newtheorem{lemma}[theorem]{Lemma}
\theoremstyle{remark}
\newtheorem{remark}{Remark}[section]
\DeclareMathOperator{\lcm}{lcm}
\DeclareMathOperator{\ord}{ord}
\DeclareMathOperator{\floorpart}{floor}
\DeclareMathOperator{\fracpart}{frac}
\begin{document}

\maketitle

\begin{abstract}
    For every monic polynomial $f \in \mathbb{Z}[X]$ with $\deg(f) \geq 1$, let
    $\mathcal{L}(f)$ be the set of all linear recurrences with values in $\mathbb{Z}$ and characteristic polynomial $f$, and let
    \begin{equation*}
	    \mathcal{R}(f) := \big\{\rho(\bm{x}; m) : \bm{x} \in \mathcal{L}(f), \, m \in \mathbb{Z}^+ \big\} ,
	\end{equation*}
    where $\rho(\bm{x}; m)$ is the number of distinct residues of $\bm{x}$ modulo $m$.

    Dubickas and Novikas proved that $\mathcal{R}(X^2 - X - 1) = \mathbb{Z}^+$.
    We generalize this result by showing that $\mathcal{R}(X^2 - a_1 X - 1) = \mathbb{Z}^+$ for every nonzero integer $a_1$.
    As a corollary, we deduce that for all integers $a_1 \geq 1$ and $k \geq 4$ there exists $\xi \in \mathbb{R}$ such that the sequence of fractional parts $\big(\!\fracpart(\xi \alpha^n)\big)_{n \geq 0}$, where $\alpha := \big(a_1 + \sqrt{a_1^2 + 4}\,\big) / 2$, has exactly $k$ limit points.
    Our proofs are constructive and employ some results on the existence of special primitive divisors of certain Lehmer sequences.
\end{abstract}

\section{Introduction}

Let $a_1, \dots, a_k$ be integers.
An integer sequence $\bm{x} = (x_n)_{n\geq 0}$ is a \emph{linear recurrence} with \emph{characteristic polynomial}
\begin{equation*}
	f = X^k - a_1 X^{k - 1} - a_2 X^{k - 2} - \cdots - a_k
\end{equation*}
if for all integers $n \geq k$ we have that
\begin{equation}\label{equ:recurrence}
	x_n = a_1 x_{n - 1} + a_2 x_{n - 2} + \cdots + a_k x_{n - k} .
\end{equation}
The terms $x_0, \dots, x_{k-1}$, which together with $f$ completely determine $\bm{x}$ via~\eqref{equ:recurrence}, are the \emph{initial values} of $\bm{x}$.
We let $\mathcal{L}(f)$ denote the set of all (integral) linear recurrences with characteristic polynomial $f$.
It is easily seen that each $\bm{x} \in \mathcal{L}(f)$ is ultimately periodic modulo $m$, for every integer $m \geq 1$, and in fact (purely) periodic if $\gcd(m, a_k) = 1$.
Indeed, properties of linear recurrences modulo $m$ have been studied extensively, including: which residues modulo $m$ appear in $\bm{x}$ and how frequently~\cite{MR369240,MR3298566,MR2024599,MR1119645,MR1117027,MR1393478}, and for which values of $m$ the linear recurrence $\bm{x}$ contains a complete system of residues modulo $m$~\cite{MR3068231,MR294238,MR4357629,MR951911,MR3696266}.

We let $\tau(\bm{x}; m)$ denote the (minimal) period of $\bm{x}$ modulo $m$, that is, the minimal integer $t \geq 1$ such that $x_{n + t} \equiv x_n \pmod m$ for all sufficiently large integers $n \geq 0$.
Moreover, we let $\rho(\bm{x}; m) := \#\big\{x_n \bmod m : n \geq 0\big\}$ be the number of distinct residues of $\bm{x}$ modulo $m$, and we put
\begin{equation*}
	\mathcal{R}(f) := \big\{\rho(\bm{x}; m) : \bm{x} \in \mathcal{L}(f), \, m \in \mathbb{Z}^+ \big\} .
\end{equation*}
Dubickas and Novikas~\cite{MR4125906}, motivated by some problems on fractional parts of powers of Pisot numbers~\cite{MR3397268}, proved that $\mathcal{R}(X^2 - X - 1) = \mathbb{Z}^+$ and stated that it ``may be very diﬃcult in general'' to determine $\mathcal{R}(f)$.
Sanna~\cite{MR4350306} considered the special case in which $f$ is a quadratic polynomial with roots $\alpha, \beta$ such that $\alpha\beta = \pm 1$ and $\alpha/\beta$ is not a root of unity, and proved two results.
First, that $\mathcal{R}(f)$ contains all integers $n \geq 7$ with $n \neq 10$ and $4 \nmid n$.
Second, that $4D_0 \mathbb{Z}^+ \subseteq \mathcal{R}(f)$ if $\alpha\beta = 1$, and $8D_0 \mathbb{Z}^+ \subseteq \mathcal{R}(f)$ if $\alpha\beta = -1$; where $D_0$ is the squarefree part of the discriminant of $f$, and it is assumed that $D_0 \equiv 1 \pmod 4$ and $D_0 \geq 5$.

Our result is the following.

\begin{theorem}\label{thm:main}
	Let $a_1$ be a nonzero integer.
	Then $\mathcal{R}(X^2 - a_1 X - 1) = \mathbb{Z}^+$.
    In other words, for every $n \in \mathbb{Z}^+$ there exist $\bm{x} \in \mathcal{L}(f)$ and $m \in \mathbb{Z}^+$ such that $\rho(\bm{x}; m) = n$.
    Moreover, one can choose $\bm{x}$ and $m$ so that all the residues of $\bm{x}$ modulo $m$ are nonzero if and only if $|a_1| = 1$ and $n \geq 4$, or $|a_1| \geq 2$.
\end{theorem}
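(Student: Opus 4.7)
Fix the basic Lucas-type solution $U = (U_n)_{n \geq 0}$ with $U_0 = 0$ and $U_1 = 1$ (so $U_n = a_1 U_{n-1} + U_{n-2}$). Every $\bm x \in \mathcal L(X^2 - a_1 X - 1)$ decomposes as $x_n = x_0 U_{n-1} + x_1 U_n$, with the convention $U_{-1} := 1$, so the whole question reduces to choosing $(x_0, x_1)$ and $m$ that make the orbit of $(x_0, x_1)$ under the companion matrix $M = \bigl(\begin{smallmatrix} 0 & 1 \\ 1 & a_1 \end{smallmatrix}\bigr)$ in $(\mathbb Z/m)^2$ produce exactly $n$ first coordinates. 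The main tools are the addition formula $U_{s+t} = U_{s+1} U_t + U_s U_{t-1}$ and the Cassini-type identity $U_{t+1}^2 - U_t U_{t+2} = (-1)^t$.

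The structural core is the analysis of $U$ modulo a prime $p \nmid a_1^2 + 4$. Let $r = r(p)$ be the rank of apparition. Setting $s = r$ in the addition formula yields $U_{r+k} \equiv U_{r-1} U_k \pmod p$, and iteration gives $U_{qr+k} \equiv U_{r-1}^q U_k \pmod p$ for all $q \geq 0$ and $0 \leq k < r$. Hence $U$ has Pisano-like period $rd$ modulo $p$, with $d := \ord_p(U_{r-1})$, and Cassini forces $U_{r-1}^2 \equiv (-1)^r \pmod p$, so $d \in \{1, 2\}$ when $r$ is even and $d \in \{1, 4\}$ when $r$ is odd. The set of residues of $U$ modulo $p$ is $\{0\} \cup \bigcup_{k=1}^{r-1} U_k \langle U_{r-1}\rangle$, of cardinality $1 + dc$ where $c$ is the number of cosets of $\langle U_{r-1}\rangle$ in $(\mathbb Z/p)^*$ met by $U_1, \ldots, U_{r-1}$; a similar analysis handles prime-power moduli and translated initial conditions.

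To realise a given $n \in \mathbb Z^+$, I would combine three tracks. Small values of $n$ are covered by direct constructions: the zero sequence for $n = 1$; the constant sequence $x_n \equiv 1 \pmod{|a_1|}$ (a legitimate recurrence since $a_1 \equiv 0 \pmod{|a_1|}$) when $|a_1| \geq 2$; and residues of $U$ modulo small primes for the next few values. For large $n$, I would extract a \emph{special} primitive prime divisor of the Lehmer sequence attached to the roots of $f$---specifically, a prime $p$ with $r(p) = n$ \emph{and} $U_{n-1} \equiv 1 \pmod p$---forcing $d = 1$, so that the residue count collapses to $1 + |\{U_1, \ldots, U_{n-1}\} \bmod p|$, which one verifies equals $n$ outside a controlled exceptional set. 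Indices missed by both regimes are picked up by combining two realisations via the Chinese Remainder Theorem, using moduli with coprime periods so that the counts $\rho$ multiply.

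For the \emph{moreover} clause, $\bm x$ avoids the residue $0$ modulo $m$ iff the $M$-orbit of $(x_0, x_1)$ misses the axis $\{(0, \ast)\} \subset (\mathbb Z/m)^2$. For $|a_1| \geq 2$, the constant orbit modulo $|a_1|$ already realises $n = 1$ zero-freely, and the primitive-divisor construction for larger $n$ can be translated by a unit so that no zero ever appears. For $|a_1| = 1$, the values $n \in \{1, 2, 3\}$ must instead be \emph{ruled out} by a short classification of Fibonacci-like orbits of cardinality $\leq 3$, each of which provably passes through $0$. The principal obstacle is the special primitive-divisor input---a refinement of Bilu--Hanrot--Voutier that picks out $U_{n-1} \equiv 1 \pmod p$ among the (at most four) square roots of $(-1)^r$ in $\mathbb F_p$---together with the bookkeeping of its finite exceptional set, each of whose members will require a bespoke construction.
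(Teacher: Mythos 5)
Your overall architecture (primitive divisors of the attached Lucas/Lehmer sequence, direct constructions for small $n$, a CRT step to combine moduli) matches the paper's, but the central counting step of your plan is incorrect. You propose to take a prime $p$ of rank $r(p)=n$ with multiplier $U_{n-1}\equiv 1\pmod p$ and to claim that the residue set of $U$ modulo $p$ then has cardinality $1+\#\{U_1,\dots,U_{n-1}\bmod p\}$, ``which one verifies equals $n$''. It does not: the terms $U_1,\dots,U_{n-1}$ are forced to collide. From $U_{-k}=(-1)^{k+1}U_k$ and your own addition formula one gets $U_{r-k}\equiv(-1)^{k+1}U_{r-1}U_k\pmod p$, so when $U_{r-1}\equiv 1$ every odd index $k$ yields the coincidence $U_{r-k}\equiv U_k$; the count therefore falls well short of $n$, and no choice of ``special'' prime repairs this, because the coincidences come from the recurrence itself, not from the prime. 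The paper's way around this is to abandon the fixed sequence $U$ and tune the \emph{initial values}: in Lemma~\ref{lem:parity} one takes $x_0=1$ and chooses $x_1=c$ to dodge at most $n$ forbidden residues modulo $p^{\nu_p(\ell_n)}$, which is possible precisely because the primitive divisor is \emph{high} ($p^{\nu_p(\ell_n)}>n$); this kills all coincidences $x_s\equiv x_t$ with $s\equiv t\pmod 2$, and the remaining cross-parity coincidences are separated by the CRT step (Lemma~\ref{lem:combining-sequences}), whose two moduli have \emph{non}-coprime periods and whose counting formula is not a simple product of the two $\rho$'s, contrary to what your sketch assumes. In the split case $(D\mid p)=1$ the paper instead uses the geometric sequence $x_j\equiv a^j$ with $a$ a root of $f$ modulo $p$ of order $2n$ (or $n$), for which distinctness of residues is automatic.

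A second gap is the input itself: the existence of a primitive divisor with the extra condition $U_{n-1}\equiv 1\pmod p$ is asserted as ``a refinement of Bilu--Hanrot--Voutier'', but no such refinement exists; the primitive divisor theorem controls the rank of apparition, not the multiplier. (Also, by your Cassini identity $U_{r-1}^2\equiv(-1)^r$, so for odd $r$ the multiplier has order exactly $4$, never $1$ --- your ``$d\in\{1,4\}$ when $r$ is odd'' should read $d=4$ --- hence your mechanism cannot even in principle produce odd values of $n$.) The paper sidesteps the multiplier entirely: Lemma~\ref{lem:legendre-D-p-equal-1} shows $(D\mid p)=1$ automatically when $n$ is odd, and for even $n$ it strengthens ``primitive divisor'' to ``high primitive divisor'' (Lemma~\ref{lem:existence-of-primitive-divisor-high-power}), which is provable from Ward's lower bound on cyclotomic numbers plus a finite computation. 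You would need to replace your special-divisor hypothesis with something similarly provable, and supply a mechanism for eliminating the forced coincidences, before the plan can go through.
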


The assumption that $a_1$ is nonzero is not a restriction, since it is easy to prove that $\mathcal{R}(X^2 - 1) = \{1, 2\}$.
We remark that the proof of Theorem~\ref{thm:main} is constructive.
It provides an algorithm that, given as input a nonzero integer $a_1$ and an integer $n \geq 1$, returns as output a modulo $m$ and the initial values $x_0, x_1$ of a linear recurrence $\bm{x} \in \mathcal{L}(X^2 - a_1 X - 1)$ such that $\rho(\bm{x}; m) = n$ and, if $|a_1| \geq 2$ or $n \geq 4$, all the residues of $\bm{x}$ modulo $m$ are nonzero.

For every $t \in \mathbb{R}$, let $\floorpart(t)$ be the \emph{floor function} of $t$, that is, the greatest integer not exceeding $t$, and let $\fracpart(t) := t - \floorpart(t)$ be the \emph{fractional part} of $t$.
As a corollary of their aforementioned result, Dubickas and Novikas~\cite{MR4125906} proved that for every integer $k \geq 2$ there exists $\xi \in \mathbb{R}$ such that the sequence $\big(\!\fracpart\!\big(\xi ((1 + \sqrt{5})/2)^n\big)\big)_{n \geq 0}$ has exactly $k$ limit points.

From Theorem~\ref{thm:main}, we deduce the following corollary.

\begin{corollary}\label{cor:fractional-parts}
    Let $a_1 \geq 1$ be an integer, and let $\alpha := \big(a_1 + \sqrt{a_1^2 + 4}\,\big) / 2$.
    Then, for every integer $k \geq 2$, and for every integer $k \geq 1$ if $a_2 \geq 2$, there exists $\xi \in \mathbb{Q}(\alpha)$ such that the sequence $\big(\!\fracpart(\xi \alpha^n)\big)_{n \geq 0}$ has exactly $k$ limit points.
\end{corollary}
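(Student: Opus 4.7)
The strategy is to transfer Theorem~\ref{thm:main} to the fractional-part setting via a Binet-type formula. Since $a_1 \geq 1$, we have $\alpha > 1$ and the conjugate $\beta := (a_1 - \sqrt{a_1^2+4})/2 = -1/\alpha$ lies in $(-1, 0)$. For any nonzero $\bm{x} \in \mathcal{L}(X^2 - a_1 X - 1)$, we may write $x_n = A\alpha^n + B\beta^n$ with $A, B \in \mathbb{Q}(\alpha)\setminus\{0\}$, nonvanishing of $B$ following from the irrationality of $\alpha$. For each $m \geq 1$, setting $\xi := A/m$ gives
\begin{equation*}
    \xi\,\alpha^n \;=\; \frac{x_n}{m} \;-\; \frac{B}{m}\,\beta^n ,
\end{equation*}
with the second summand tending to $0$ as $n \to \infty$.

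Next, I would invoke Theorem~\ref{thm:main} to choose $\bm{x}$ and $m$ realizing the desired residue pattern. Because the characteristic coefficient $a_2 = 1$ is coprime to $m$, the sequence $(x_n \bmod m)$ is purely periodic, so the limit points of $\fracpart(\xi\alpha^n)$ can be read off residue class by residue class. If $x_n \equiv r \pmod m$ on a subsequence with $r \neq 0$, a short calculation using $|\beta| < 1$ shows $\fracpart(\xi\alpha^n) \to r/m \in (0, 1)$ along that subsequence; if $r = 0$, then $\fracpart(\xi\alpha^n) = \fracpart(-(B/m)\beta^n)$, and since $\beta < 0$ the alternating sign of $\beta^n$ yields the two limit points $0$ and $1$, provided the zero-residue indices contain both parities. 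Summing over residue classes, the number of limit points equals $\rho(\bm{x}; m)$ if no residue equals $0$, and $\rho(\bm{x}; m) + 1$ otherwise.

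To match this count to the prescribed $k$, I split into cases using Theorem~\ref{thm:main}. If $a_1 \geq 2$ and $k \geq 1$, or $a_1 = 1$ and $k \geq 4$, I take $\rho(\bm{x}; m) = k$ with all residues nonzero; this yields exactly $k$ limit points. For the remaining range $a_1 = 1$ with $k \in \{2, 3\}$, I take $\rho(\bm{x}; m) = k - 1$: since the all-nonzero option is unavailable in these cases by the ``iff'' in Theorem~\ref{thm:main}, a zero residue must appear, and its extra contribution raises the count to $k$. The subcase $k = 2$ is transparent, since one may take $\bm{x}$ with all terms divisible by $m$, so that every index is a zero index and both parities trivially occur.

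The main obstacle I anticipate is the parity condition on the zero-residue indices needed to secure both limit points $0$ and $1$, which is most delicate in the $k = 3$, $a_1 = 1$ subcase. This must be arranged either by a careful choice of initial values and modulus in the construction behind Theorem~\ref{thm:main}, or else circumvented by an explicit construction tailored to this single combination of parameters. Once this technicality is settled, the corollary follows immediately.
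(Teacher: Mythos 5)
Your main construction is exactly the paper's: write $x_n$ via the Binet formula, set $\xi$ equal to the leading Binet coefficient divided by $m$, and observe that $\xi\alpha^n - x_n/m = (B/m)\beta^n \to 0$ because $|\beta|<1$, so that when Theorem~\ref{thm:main} supplies $\bm{x}$ and $m$ with $\rho(\bm{x};m)=k$ and all residues nonzero, the limit points of $\big(\!\fracpart(\xi\alpha^n)\big)_{n\ge 0}$ are precisely $r_1/m,\dots,r_k/m$. This settles every case with $a_1\ge 2$, and the case $a_1=1$ with $k\ge 4$. (Your remark that the sign of the leading coefficient can be ignored is correct; the paper normalizes $c_1>0$ by replacing $\bm{x}$ with $-\bm{x}$, but the limit-point count is unaffected either way, so this is cosmetic.)

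Where you diverge is $a_1=1$ with $k\in\{2,3\}$, which the paper does not argue at all: it disposes of the entire case $a_1=1$ by citing \cite[Corollary~2]{MR4125906}, of which Corollary~\ref{cor:fractional-parts} for $a_1=1$ is a restatement. Your zero-residue analysis there is the right idea but, as you acknowledge, not closed: to get the count $\rho(\bm{x};m)+1$ you need the indices $n$ with $x_n\equiv 0\pmod m$ to occur in both parities, since $\fracpart\big((B/m)\beta^n\big)$ tends to $0$ along one parity and to $1$ along the other. For $k=2$ your choice (all terms divisible by $m$, e.g.\ $\bm{x}=m\cdot(F_n)_{n\ge 0}$ with $F_n$ the Fibonacci numbers) does satisfy this, every index being a zero index, and $B\ne 0$ there. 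For $k=3$ the gap is genuine but easily filled: take the Fibonacci sequence itself modulo $m=2$, whose residues are $\{0,1\}$ with zeros exactly at $n\equiv 0\pmod 3$, hence at indices of both parities; the limit points are then $\{0,\tfrac12,1\}$. With that one addition (or by simply quoting Dubickas--Novikas for $a_1=1$, as the paper does), your argument is complete.
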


We remark that sequence of fractional parts $\big(\!\fracpart(\xi \alpha^n)\big)_{n \geq 0}$, where $\alpha > 1$ is a real algebraic number and $\xi \in \mathbb{R}$, has been studied by several authors~\cite{MR2184199,MR2201605,MR2240352,MR0017744,MR0002326,MR2256803,MR3397268}.

The paper is structured as follows.
Section~\ref{sec:preliminaries} contains several preliminary lemmas.
More precisely, Section~\ref{sec:lehmer} is devoted to prove the existence of some special primitive divisors of specific Lehmer sequences, Section~\ref{sec:orders} contains some elementary but useful results on certain multiplicative orders, and Section~\ref{sec:second-order-recurrences} provides the main lemmas for the construction of the desired sequences in $\mathcal{L}(f)$.
Then, Sections~\ref{sec:proof-main} and~\ref{sec:proof-fractional-parts} are devoted to the proofs of Theorem~\ref{thm:main} and Corollary~\ref{cor:fractional-parts}, respectively.

\section{Preliminaries}\label{sec:preliminaries}

Hereafter, let $a_1$ be a nonzero integer, let $f := X^2 - a_1 X - 1$, let $D := a_1^2 + 4$ be the discriminant of $f$, let $K := \mathbb{Q}\big(\!\sqrt{D}\big)$ be the splitting field of $f$, and let $\alpha, \beta \in K$ be the roots of $f$.
Note that $D$ is not a square in $\mathbb{Q}$, so that $K$ is a quadratic number field.
Moreover, note that $\alpha$ and $\beta$ are distinct real numbers, $\alpha\beta=-1$, $\alpha + \beta = a_1$, $(\alpha-\beta)^2 = D$, and $\alpha / \beta$ is not a root of unity.

We begin with the following lemma.

\begin{lemma}\label{lem:reversal}
    Let $\bm{x} \in \mathcal{L}(f)$, let $m \geq 1$ be an integer, and let $t := \tau(\bm{x}; m)$.
    Then there exists $\bm{y} \in \mathcal{L}(X^2 + a_1 X - 1)$ such that $y_n \equiv x_{(-(n + 1) \bmod t)}\! \pmod m$ for every integer $n \geq 0$, where $(-(n+1) \bmod t)$ is the unique integer $r \in {[0,t)}$ such that $r \equiv -(n+1) \pmod t$.
    In~particular, we have that $\tau(\bm{x}; m) = \tau(\bm{y}; m)$, $\rho(\bm{x}; m) = \rho(\bm{y}; m)$, and that all residues of $\bm{x}$ modulo $m$ are nonzero if and only if all residues of $\bm{y}$ modulo $m$ are nonzero.
\end{lemma}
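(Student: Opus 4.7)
The plan is to exploit the fact that the recurrence $x_n = a_1 x_{n-1} + x_{n-2}$ can be rewritten as $x_{n-2} = -a_1 x_{n-1} + x_n$, so that reading $\bm{x}$ backwards produces a sequence satisfying the recurrence whose characteristic polynomial is $X^2 + a_1 X - 1$. Since the constant term of $f$ is $-1$, the coprimality condition $\gcd(m, a_k) = 1$ is automatic, and hence $\bm{x}$ is \emph{purely} periodic modulo $m$, with $x_{n+t} \equiv x_n \pmod m$ for every integer $n \geq 0$.

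Accordingly, I would define $\bm{y} \in \mathcal{L}(X^2 + a_1 X - 1)$ by setting the integer initial values $y_0 := x_{t-1}$ and $y_1 := x_{t-2}$ and then extending by the recurrence $y_n = -a_1 y_{n-1} + y_{n-2}$ for $n \geq 2$. Writing $r_n := (-(n+1)) \bmod t$ for brevity, the key claim is that $y_n \equiv x_{r_n} \pmod m$ for every $n \geq 0$, which I would prove by induction. The base cases $n = 0, 1$ hold by construction. For the inductive step, note that $r_{n-1} \equiv r_n + 1 \pmod t$ and $r_{n-2} \equiv r_n + 2 \pmod t$, so by pure periodicity we have $x_{r_{n-1}} \equiv x_{r_n + 1}$ and $x_{r_{n-2}} \equiv x_{r_n + 2}$ modulo $m$. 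Substituting into the recurrence for $y_n$ and applying the identity $x_{r_n + 2} = a_1 x_{r_n + 1} + x_{r_n}$ yields
\begin{equation*}
    y_n \;\equiv\; -a_1 x_{r_n+1} + x_{r_n+2} \;\equiv\; x_{r_n} \pmod m ,
\end{equation*}
closing the induction.

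The ``in particular'' assertions then follow quickly. Since $\{r_n : n \geq 0\} = \{0, 1, \ldots, t-1\}$, the set of residues of $\bm{y}$ modulo $m$ coincides with that of $\bm{x}$, giving both $\rho(\bm{x}; m) = \rho(\bm{y}; m)$ and the equivalence for the ``all residues nonzero'' property. For the periods, $\tau(\bm{y}; m) \leq t$ is immediate from the congruence and the $t$-periodicity of $n \mapsto r_n$; conversely, any period $s$ of $\bm{y}$ modulo $m$ forces $x_{r} \equiv x_{r - s} \pmod m$ for all $r \in [0,t)$, whence the minimal period $t$ of $\bm{x}$ divides $s$, giving $\tau(\bm{y}; m) \geq t$.

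The whole argument is essentially a careful bookkeeping exercise built on the time-reversal symmetry of the recurrence, and I do not foresee any substantive obstacle beyond tracking indices modulo $t$ consistently through the induction.
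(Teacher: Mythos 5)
Your proposal is correct and follows essentially the same route as the paper: both constructions take $\bm{y} \in \mathcal{L}(X^2 + a_1 X - 1)$ with $y_0 = x_{t-1}$, $y_1 = x_{t-2}$ and verify $y_n \equiv x_{(-(n+1) \bmod t)} \pmod m$ by induction, the only cosmetic difference being that the paper first extends $\bm{x}$ to negative indices via $x_n := -a_1 x_{n+1} + x_{n+2}$ so as to obtain the exact identity $y_n = x_{t-(n+1)}$ before reducing modulo $t$ and $m$, whereas you track the indices $r_n$ modulo $t$ throughout using pure periodicity. The one tiny wrinkle in your version is the degenerate case $t = 1$, where $x_{t-2} = x_{-1}$ is undefined; this is fixed either by setting $y_1 := x_{(t-2) \bmod t}$ or by adopting the paper's device of extending $\bm{x}$ to negative indices.
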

\begin{proof}
	Let us extend $\bm{x}$ to negative indices by defining $x_n := -a_1 x_{n + 1} + x_{n + 2}$ for every integer $n \leq -1$.
	Then, we have that $x_n = -a_1 x_{n + 1} + x_{n + 2}$ and $x_{n + t} \equiv x_n \pmod m$ for every integer $n$.
	Let $\bm{y} \in \mathcal{L}(X^2 + a_1 X - 1)$ with $y_0 = x_{t - 1}$ and $y_1 = x_{t - 2}$.
	Since $y_{n + 2} = -a_1 y_{n + 1} + y_n$ for every integer $n \geq 0$, it follows easily by induction that $y_n = x_{t - (n + 1)}$ for every integer $n \geq 0$.
	Hence, by the periodicity of $\bm{x}$, we get that $y_n \equiv x_{(-(n + 1) \bmod t)}\! \pmod m$ for every integer $n \geq 0$, as desired.
\end{proof}

In light of Lemma~\ref{lem:reversal}, hereafter we assume that $a_1 \geq 1$.

\begin{remark}
    Since Dubickas and Novikas already proved the case $a_1 = 1$ of Theorem~\ref{thm:main} and Corollary~\ref{cor:fractional-parts}, we could assume that $a_1 \geq 2$.
    However, we choose to include the case $a_1 = 1$ in our proofs in order to state some of the intermediary results with greater generality.
\end{remark}

\subsection{Lehmer sequences}\label{sec:lehmer}

Let $\gamma, \delta$ be complex numbers such that $\gamma\delta$ and $(\gamma + \delta)^2$ are nonzero coprime integers and $\gamma / \delta$ is not a root of unity.
The \emph{Lehmer sequence} $(\ell_n)$ associated to $\gamma, \delta$ is defined by
\begin{equation}\label{equ:lehmer-definition}
	\ell_n :=
	\begin{cases}
		(\gamma^n - \delta^n) / (\gamma - \delta) & \text{ if $n$ is odd}; \\
		(\gamma^n - \delta^n) / (\gamma^2 - \delta^2) & \text{ if $n$ is even};
	\end{cases}
\end{equation}
for all integers $n \geq 0$.
The conditions on $\gamma, \delta$ ensure that each $\ell_n$ is an integer.
A prime number $p$ is a \emph{primitive divisor} of $\ell_n$ if $p \mid \ell_n$ but $p \nmid (\gamma^2 - \delta^2)^2 \ell_1 \cdots \ell_{n-1}$ (cf.~\cite{MR1863855}).
Note that $\ell_1 = \ell_2 = 1$, so that $\ell_n$ can have primitive divisors only if $n \geq 3$.

The sequence of \emph{cyclotomic numbers} $(\phi_n)$ associated to $\gamma, \delta$ is defined by
\begin{equation}\label{equ:cyclotomic-definition}
	\phi_n := \prod_{\substack{1 \,\leq\, k \,\leq\, n \\[1pt] \gcd(n,k) \,=\, 1}} \left(\gamma - \mathrm{e}^{\frac{2\pi\mathbf{i}k}{n}} \delta\right) ,
\end{equation}
for all integers $n \geq 1$.
It can be proved that $\phi_n \in \mathbb{Z}$ for every integer $n \geq 3$ \cite[p.~84]{MR1863855}.

The next lemma relates the cyclotomic numbers with the primitive divisors of the Lehmer sequence.
For every prime number $p$, let $\nu_p$ denote the $p$-adic valuation.
For every integer $n \geq 4$, let $P^\prime(n)$ denote the greatest prime factor of $n /\! \gcd(n, 3)$.

\begin{lemma}\label{lem:product-of-primitive-divisors}
	Let $n \geq 5$ be an integer with $n \neq 6$.
	Then
	\begin{equation*}
		|\phi_n| \, \Big/ \!\! \prod_{p \emph{ prim.\ div.\ } \ell_n} p^{\nu_p(\ell_n)}
		\in
		\begin{cases}
			\{1, P^\prime(n)\} & \emph{ if } n \neq 12 ; \\
			\{1, 2, 3, 6\} & \emph{ if } n = 12 ; \\
		\end{cases}
	\end{equation*}
	where the product runs over all the primitive divisors $p$ of $\ell_n$.
\end{lemma}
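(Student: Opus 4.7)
The plan is to reduce the claim to a classical Birkhoff--Vandiver-style analysis of primitive and intrinsic primes of the cyclotomic numbers $\phi_n$.

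First I would establish the factorisation $\ell_n = \prod_{d \mid n,\, d \geq 3} \phi_d$: homogenising $X^n - 1 = \prod_{d\mid n}\Phi_d(X)$ gives $\gamma^n - \delta^n = \prod_{d\mid n}\phi_d$, and dividing out $\phi_1$ (if $n$ is odd) or $\phi_1\phi_2 = \gamma^2-\delta^2$ (if $n$ is even) yields the displayed identity. Then, for a prime $p \nmid 2\gamma\delta(\gamma^2-\delta^2)$, I would introduce its rank of apparition $r(p) := \min\{m \geq 1 : p \mid \ell_m\}$ and apply lifting-the-exponent to $\gamma^m - \delta^m$ to get, for $r(p) \mid m$,
\[
\nu_p(\ell_m) \;=\; \nu_p(\ell_{r(p)}) + \nu_p(m/r(p))
\]
(with the customary refinement at $p = 2$). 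Substituting into the cyclotomic factorisation yields the dichotomy: either $r(p) = n$ and $\nu_p(\phi_n) = \nu_p(\ell_n)$, so that $p$ is a primitive divisor of $\ell_n$; or $n = r(p)\,p^j$ for some $j \geq 1$ and $\nu_p(\phi_n) = 1$, so that $p$ is an \emph{intrinsic} non-primitive prime dividing $n$.

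It follows at once that
\[
\frac{|\phi_n|}{\prod_{p\text{ prim.\ div.\ }\ell_n} p^{\nu_p(\ell_n)}} \;=\; \prod_{p \in I_n} p,
\]
where $I_n$ collects the intrinsic primes, namely those $p \mid n$ with $n/r(p)$ a positive power of $p$. The classical upper bound $r(p) \leq p+1$ for Lehmer sequences gives $n/p^{\nu_p(n)} \leq p+1$ whenever $p \in I_n$; since every other prime $q \mid n$ must then divide $n/p^{\nu_p(n)} \leq p+1$, a short arithmetic case analysis shows that for $n \geq 5$ with $n \neq 6, 12$ one has $|I_n| \leq 1$ and necessarily $I_n \subseteq \{P'(n)\}$, producing the set $\{1, P'(n)\}$. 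The $\gcd(n,3)$ twist in the definition of $P'(n)$ is forced by the special small-index behaviour of the prime $3$ in Lehmer sequences.

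For the exceptional index $n = 12$, the same bound admits the pair $(p,q) = (2,3)$ as a simultaneous solution: direct verification shows that both $p = 2$ (with $r(2) \in \{3, 6\}$) and $p = 3$ (with $r(3) = 4$) can lie in $I_{12}$, each contributing a factor of $1$ to $\nu_p(\phi_{12})$, and the four possible subsets of $\{2, 3\}$ yield precisely $\{1, 2, 3, 6\}$. The main obstacle I anticipate is the $p$-adic bookkeeping for the small primes $p \in \{2, 3\}$: the lifting-the-exponent lemma is already delicate at $p = 2$, and the prime $3$ is exactly what forces both the $\gcd(n,3)$ correction in the definition of $P'(n)$ and the exceptional treatment of $n = 6, 12$.
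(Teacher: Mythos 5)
Your overall skeleton is exactly the paper's: you establish $\gamma^m-\delta^m=\prod_{d\mid m}\phi_d$, invert to write $\phi_n$ in terms of the $\ell_d$, observe that a primitive divisor $p$ contributes $\nu_p(\phi_n)=\nu_p(\ell_n)$, and then classify the remaining (``intrinsic'') prime divisors of $\phi_n$. The only structural difference is that the paper disposes of the last step by citing the classical classification (the ``only if'' direction of Theorem~2.4 in Bilu--Hanrot--Voutier, or Stewart's Lemma~6), whereas you sketch a proof of it via the rank of apparition and lifting-the-exponent. That is a legitimate route --- it is essentially the Birkhoff--Vandiver argument underlying the cited results --- but as written it has a concrete hole.

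The gap is in the step ``It follows at once that the quotient equals $\prod_{p\in I_n}p$.'' You set up the rank-of-apparition/LTE dichotomy only for primes $p\nmid 2\gamma\delta(\gamma^2-\delta^2)$, so your set $I_n$ omits the primes dividing $(\gamma^2-\delta^2)^2$. These matter twice over: first, by the paper's definition a prime dividing $(\gamma^2-\delta^2)^2$ is \emph{never} a primitive divisor of $\ell_n$, even if its rank of apparition is exactly $n$, so it is not cancelled by the denominator and must be absorbed into your intrinsic analysis; second, for such primes the ingredients you rely on fail as stated --- e.g.\ if $p\mid(\gamma+\delta)^2$ then the rank of apparition is $2p$, which violates your bound $r(p)\leq p+1$, and the LTE identity needs its ramified and $p=2$ variants. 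One can check that all of these cases still land inside $\{1,P'(n)\}$ (for $p\mid(\gamma-\delta)^2$ one gets $n=p^{j}$, for $p\mid(\gamma+\delta)^2$ one gets $n=2p^{j}$, and in each case $\nu_p(\phi_n)=1$), but this verification is precisely the delicate content of the lemma the paper cites, so it cannot be waved through with ``at once.'' Your case analysis for $n\neq 6,12$ (at most one intrinsic prime, equal to $P'(n)$) and your description of the $n=12$ exception are otherwise correct.
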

\begin{proof}
	From~\eqref{equ:cyclotomic-definition} it follows easily that
	\begin{equation}\label{equ:binomial}
		\gamma^m - \delta^m = \prod_{d \,\mid\, m} \phi_d ,
	\end{equation}
	for all integers $m \geq 1$.
	In turn, applying to~\eqref{equ:binomial} the M\"obius inversion formula and taking into account~\eqref{equ:lehmer-definition}, we get that
	\begin{equation}\label{equ:cyclotomic-as-lehmer-product}
		\phi_m = \prod_{d \,\mid\, m} \left(\gamma^d - \delta^d\right)^{\mu(m / d)} = \prod_{d \,\mid\, m} \ell_d^{\,\mu(m / d)} ,
	\end{equation}
	for all integers $m \geq 3$, where $\mu$ is the M\"obius function and where we also employed the well-known fact that $\sum_{d \,\mid\, m} \mu(d) = 0$ for every integer $m \geq 2$.

	Let $p$ be a prime number.
	If $p$ is a primitive divisor of $\ell_n$, then $p$ does not divide $\ell_k$ for every positive integer $k < n$.
	Hence, from~\eqref{equ:cyclotomic-as-lehmer-product} we have that $\nu_p(\phi_n) = \nu_p(\ell_n)$.

	If $p$ is not a primitive divisor of $\ell_n$ and $p \mid \phi_n$, then it is known that $\nu_p(\phi_n) = 1$ and that either $p = P^\prime(n)$, if $n \neq 12$, or $p \in \{2, 3\}$, if $n = 12$; see the ``only if'' part of the proof of \cite[Theorem~2.4]{MR1863855}, or see the paragraphs before, and the comment after, \cite[Lemma~6]{MR0491445}.
\end{proof}

We need a lower bound for the absolute values of cyclotomic numbers.

\begin{lemma}\label{lem:lower-bound-cyclotomic}
	Suppose that $\gamma, \delta$ are real numbers with $\gamma\delta > 0$.
	Then
	\begin{equation*}
		|\phi_n| > |4 (\gamma - \delta)^2 \gamma \delta|^{\varphi(n) / 4}
	\end{equation*}
	for every integer $n \geq 3$, where $\varphi$ is the Euler function.
\end{lemma}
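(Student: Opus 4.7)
The plan is to reduce the claimed bound on $|\phi_n|$ to an elementary trigonometric product inequality. Up to replacing $(\gamma,\delta)$ by $(-\gamma,-\delta)$ and swapping them if needed (operations that preserve $|\phi_n|$, $\gamma\delta$, and $(\gamma-\delta)^2$), I would assume $\gamma > \delta > 0$ and parametrize $\gamma = g e^\eta$, $\delta = g e^{-\eta}$ with $g := \sqrt{\gamma\delta} > 0$ and $\eta > 0$; note that $\eta \neq 0$ since $\gamma/\delta$ is not a root of unity. The identity $\cosh(2\eta) - \cos\theta = 2(\sinh^2\eta + \sin^2(\theta/2))$ applied factor by factor in \eqref{equ:cyclotomic-definition} then gives
\[
|\phi_n|^2 = (4g^2)^{\varphi(n)} \prod_{\substack{1 \leq k \leq n \\ \gcd(k,n)=1}}\bigl(\sinh^2\eta + \sin^2(\pi k/n)\bigr).
\]

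Next I would apply the AM-GM inequality $\sinh^2\eta + \sin^2(\pi k/n) \geq 2|\sinh\eta\sin(\pi k/n)|$ factorwise and use the classical identity $\prod_{\gcd(k,n)=1}|2\sin(\pi k/n)| = |\Phi_n(1)|$, where $\Phi_n$ is the $n$-th cyclotomic polynomial. Recalling that $|\Phi_n(1)|$ equals $p$ if $n$ is a prime power $p^r$ and $1$ otherwise (for $n \geq 3$), and that $|4(\gamma-\delta)^2\gamma\delta|^{\varphi(n)/2} = 4^{\varphi(n)} g^{2\varphi(n)} \sinh^{\varphi(n)}\eta$, the two steps combine to yield
\[
|\phi_n|^2 \geq |4(\gamma-\delta)^2\gamma\delta|^{\varphi(n)/2}\cdot|\Phi_n(1)|,
\]
which already delivers the non-strict version of the claim.

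Strict inequality is the only delicate point. When $n$ is a prime power, $|\Phi_n(1)| \geq 2$ forces strictness at once. Otherwise $|\Phi_n(1)| = 1$, and some AM-GM factor must be strict; this fails only if $\sinh^2\eta = \sin^2(\pi k/n)$ for every $k$ coprime to $n$, i.e.\ only if all such values of $\sin^2(\pi k/n)$ coincide. Since they come in pairs $\{k,n-k\}$, this requires $\varphi(n) = 2$, and the only non-prime-power with $\varphi(n)=2$ is $n = 6$. For that remaining case I would expand $\phi_6 = (\gamma+\delta)^2 - 3\gamma\delta$ and compute
\[
\phi_6^2 - 4(\gamma-\delta)^2\gamma\delta = \bigl((\gamma+\delta)^2 - 5\gamma\delta\bigr)^2;
\]
equality would force $(\gamma+\delta)^2 = 5\gamma\delta$, but the coprimality of the nonzero integers $(\gamma+\delta)^2$ and $\gamma\delta$ forces $|\gamma\delta| = 1$ and then $(\gamma+\delta)^2 = \pm 5$, which is impossible. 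The main obstacle is precisely this $n = 6$ edge case, where the AM-GM bound is tight and one must invoke the Lehmer integrality hypothesis to close the argument.
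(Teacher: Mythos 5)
Your write-up is self-contained where the paper simply defers to Ward, and almost all of it is correct: the normalization $\gamma>\delta>0$, the factorwise identity $|\gamma-e^{i\theta}\delta|^2=4\gamma\delta\big(\sinh^2\eta+\sin^2(\theta/2)\big)$, the AM--GM step combined with $\prod_{\gcd(k,n)=1}2\sin(\pi k/n)=|\Phi_n(1)|$, and the reduction of the strictness question to the single case $n=6$ are all sound. The gap is the very last sentence. The Lehmer conditions require $(\gamma+\delta)^2$ to be an integer, not $\gamma+\delta$ itself, so ``$\gamma\delta=1$ and $(\gamma+\delta)^2=5$'' is not impossible: it is realized by $\gamma=(1+\sqrt{5})/2$, $\delta=(\sqrt{5}-1)/2$, which is exactly the pair this paper works with when $a_1=1$. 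For that pair your own identity $\phi_6^2-4(\gamma-\delta)^2\gamma\delta=\big((\gamma+\delta)^2-5\gamma\delta\big)^2$ gives $\phi_6=(\gamma+\delta)^2-3\gamma\delta=2$ while $|4(\gamma-\delta)^2\gamma\delta|^{\varphi(6)/4}=4^{1/2}=2$ (equivalently, $\ell_6=8$ and $\phi_6=\ell_6/(\ell_2\ell_3)=2$). So the inequality degenerates to an equality there, and no argument can close this case: your proof in fact detects that the lemma as stated, for all $n\geq 3$, is false at $n=6$ for this one pair.

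This is a boundary failure of the statement rather than of your method. Your computation establishes the strict inequality for every $n\geq 3$ \emph{except} when $n=6$ and $(\gamma+\delta)^2=5\gamma\delta$ (which, by coprimality and $\gamma\delta>0$, happens only for the golden-ratio pair), where equality holds. Since the paper only invokes the lemma for $n\geq 5$ with $n\neq 6$ (in the proofs of Lemmas~\ref{lem:existence-of-primitive-divisor} and~\ref{lem:existence-of-primitive-divisor-high-power}), nothing downstream is affected; but to make your proof, and the lemma, correct you must either exclude $n=6$, weaken $>$ to $\geq$, or record this pair as the unique exception, rather than assert its impossibility.
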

\begin{proof}
	This result is due to Ward~\cite[p.~233]{MR0071446}.
\end{proof}

Hereafter, we put $\gamma := \alpha$ and $\delta := -\beta$.
Note that this choice does indeed satisfy the conditions of Lehmer sequence.
Moreover, we have that $\gamma\delta = 1$, $\gamma - \delta = a_1$, $(\gamma + \delta)^2 = D$, and $(\gamma^2 - \delta^2)^2 = a_1^2 D$.
Furthermore, the first values of $(\ell_n)$ are
\begin{align}\label{equ:small-lehmer}
    \ell_1 &= 1, \quad \ell_2 = 1, \quad \ell_3 = a_1^2 + 3, \quad \ell_4 = a_1^2 + 2, \nonumber\\
    \ell_5 &= (a_1^2 + 1)(a_1^2 + 4) + 1, \quad \ell_6 = (a_1^2 + 1)\ell_3 .
\end{align}
In particular, note that, since $\ell_3$ or $\ell_4$ is even, if $n \geq 5$ then every primitive divisor of $\ell_n$ (if it exists) is odd.

The problem of determining which terms of a Lehmer sequence have a primitive divisor has a very long history.
The first complete classification was given by Bilu, Hanrot, and Voutier~\cite{MR1863855} (see also~\cite[Remark~3.1]{MR4350306} for some missing values in \cite[Table~4]{MR1863855}).
We make use of the following two results.

\begin{lemma}\label{lem:existence-of-primitive-divisor}
	Let $n \geq 3$ be an integer.
	If $a_1 = 1$ and $n \notin \{3, 6, 10, 12\}$, or $a_1 = 3$ and $n \neq 3$, or $a_1 \notin \{1, 3\}$, then $\ell_n$ has at least one odd primitive divisor $p$.
\end{lemma}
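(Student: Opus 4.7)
The plan is to combine a direct, elementary analysis for small indices with the classification theorem of Bilu, Hanrot, and Voutier (henceforth BHV, \cite{MR1863855}) for the remaining cases. A key simplification is the remark immediately preceding the lemma: once $n \geq 5$, every primitive divisor of $\ell_n$ is automatically odd, so it suffices to exhibit any primitive divisor at all for those $n$.

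For $n = 3$, I would start from $\ell_3 = a_1^2 + 3$ and $(\gamma^2 - \delta^2)^2 \ell_1 \ell_2 = a_1^2 (a_1^2 + 4)$. The identities $\gcd(a_1^2+3, a_1^2) = \gcd(3, a_1)$ and $\gcd(a_1^2+3, a_1^2+4) = 1$ reduce the question to: does $a_1^2+3$ admit an odd prime factor other than $3$, or equal to $3$ with $3 \nmid a_1$? A short case split on the parity of $a_1$ and on whether $3 \mid a_1$ confirms that this is the case iff $a_1 \notin \{1, 3\}$, matching the stated exceptions (for $a_1 = 1$ one has $\ell_3 = 4$, and for $a_1 = 3$ the only odd prime in $\ell_3 = 12$ is $3$, which divides $a_1$). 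The case $n = 4$ is handled analogously from $\ell_4 = a_1^2 + 2$: one checks that $\ell_4$ is coprime to $\ell_3 = a_1^2 + 3$ and that $\gcd(\ell_4,\, a_1^2(a_1^2+4)) \mid 2$, so every odd prime factor of $\ell_4$ is automatically primitive, and at least one such factor exists for every $a_1 \geq 1$.

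For $n \geq 5$, I appeal to the BHV theorem, which guarantees a primitive divisor of $\ell_n$ for all $n > 30$ and enumerates the finitely many defective Lehmer pairs for $5 \leq n \leq 30$ in its Tables~2 and~4. Our specific pair $(\gamma, \delta) = (\alpha, -\beta)$ satisfies $\gamma\delta = 1$ and $\gamma - \delta = a_1$, so the final step is to intersect this one-parameter family with the BHV list. One checks that the intersection consists precisely of the pairs $(a_1, n) = (1, 6), (1, 10), (1, 12)$, which are exactly the cases excluded from the statement.

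I expect the main obstacle to be this last table-matching step: the BHV tables parameterize Lehmer pairs via $\gamma + \delta$ and $\gamma\delta$ (up to normalization and sign conventions), not directly via $a_1$, so each tabulated entry must be translated back into our parameterization to verify it does or does not fit our family. For the borderline case $n = 12$ with $a_1 = 1$ this verification is transparent: $\ell_{12} = 144 = 2^4 \cdot 3^2$, and both $2$ and $3$ already divide $\ell_3 = 4$ or $\ell_4 = 3$, hence cannot be primitive; for $n = 10$ with $a_1 = 1$ one similarly observes that $\ell_{10} = 55$ and its only non-trivial prime $11$ already divides $\ell_5 = 11$.
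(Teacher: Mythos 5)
Your handling of $n=3$ and $n=4$ is sound and close in spirit to the paper's (which argues modulo $8$ and modulo $9$ that $a_1^2+3$, resp.\ $a_1^2+2$, is not of the form $2^s3^t$, resp.\ $2^s$, when $a_1\notin\{1,3\}$). For $n\geq 5$ your route genuinely differs from the paper's: the paper uses Stewart's theorem for $n\geq 13$, Ward's explicit lower bound on $|\phi_n|$ combined with the cyclotomic-number/primitive-divisor dictionary for $5\leq n\leq 12$ with $n\neq 6$ and $a_1\notin\{1,3\}$, direct computation for $a_1\in\{1,3\}$, and a separate elementary argument for $n=6$. Your plan of intersecting the family $\bigl((\gamma+\delta)^2,\gamma\delta\bigr)=(a_1^2+4,1)$ with the Bilu--Hanrot--Voutier classification is legitimate in principle and would shorten the argument, but as written it has a gap.

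The gap is at $n=6$. The BHV finite tables classify the $n$-defective Lehmer pairs only for $4<n\leq 30$ with $n\neq 6$; the case $n=6$ (like $n\leq 4$) is governed by an infinite parametric family and does \emph{not} appear in Tables~2 and~4, so ``intersect with the finite list'' cannot dispose of it. You must treat $n=6$ separately, e.g.\ as the paper does: since $\ell_6=(a_1^2+1)\ell_3$, any primitive divisor of $\ell_6$ must divide $a_1^2+1$, and for $a_1\notin\{1,3\}$ one shows that $a_1^2+1$ is not of the form $2^s3^t$, which yields a prime $p\geq 5$ dividing $a_1^2+1$ that is then checked to be primitive for $\ell_6$. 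Two further caveats on the table-matching step you defer: first, the published BHV Table~4 is known to be incomplete (the paper cites exactly this issue), and the missing entries concern Lehmer pairs with $\gamma\delta=\pm 1$, i.e.\ potentially your family, so the intersection must be computed against the corrected tables; second, the assertion that the intersection for $5\leq n\leq 30$, $n\neq 6$, consists only of $(a_1,n)=(1,10),(1,12)$ is the entire content of the proof in that range and is stated rather than verified --- your spot checks of $\ell_{10}=55$ and $\ell_{12}=144$ for $a_1=1$ confirm those are defective, but do not rule out other entries of the tables lying in the family.
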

\begin{proof}
	If $n \geq 13$ then it is known that $\ell_n$ has at least one odd primitive divisor~\cite[Lemma~8]{MR0491445}.
	(In fact, this is true for all Lehmer sequences with real $\gamma, \delta$.)
	Hereafter, assume that $n \leq 12$.

	If $a_1 = 1$ and $n \notin \{3, 6, 10, 12\}$, then one can check that $\ell_n$ has an odd primitive divisor.
	If $a_1 = 3$ and $n \neq 3$, then one can check that $\ell_n$ has an odd primitive divisor.

	Hereafter, assume that $a_1 \notin \{1, 3\}$.
	If $n \geq 5$ and $n \neq 6$, then by Lemma~\ref{lem:lower-bound-cyclotomic}, recalling that $\gamma - \delta = a_1$, $\gamma \delta = 1$, we get that $|\phi_n| > 2^{\varphi(n)} > n$.
	In turn, by Lemma~\ref{lem:product-of-primitive-divisors}, this implies that $\ell_n$ has an odd primitive divisor.

	It remains to prove that if $n \in \{3, 4, 6\}$ then $\ell_n$ has an odd primitive divisor.
	Since $a_1 \notin \{1, 3\}$, by reasoning modulo $8$ and modulo $9$, it follows easily that $a_1^2 + 2$ is not a power of $2$, and that neither $a_1^2 + 1$ nor $a_1^2 + 3$ is equal to $2^s 3^t$ for some integers $s, t \geq 0$.
	Hence, there exists a prime number $p \geq 5$ dividing $a_1^2 + 1$.
	We claim that $p$ is an odd primitive divisor of $\ell_6$.
	In fact, since $p \geq 5$, by \eqref{equ:small-lehmer} and recalling that $D = a_1^2 + 4$, we get that $p \mid \ell_6$ and $p \nmid a_1 D \ell_1 \cdots \ell_5$.
	The proof that $\ell_3$ and $\ell_4$ have an odd primitive divisor proceeds similarly.
\end{proof}

\begin{remark}
	Lemma~\ref{lem:existence-of-primitive-divisor} is optimal, that is, one can verify that if $a_1 = 1$ and $n \in \{3, 6, 10, 12\}$, or if $a_1 = 3$ and $n = 3$, then $\ell_n$ does not have an odd primitive divisor.
\end{remark}

For every integer $n \geq 1$, we say that a prime number $p$ is a \emph{high primitive divisor} of $\ell_n$ if $p$ is an odd primitive divisor of $\ell_n$ such that $p^{\nu_p(\ell_n)} > n$.

\begin{lemma}\label{lem:existence-of-primitive-divisor-high-power}
	Let $n \geq 3$ be an integer.
	If $a_1 = 1$ and $n \notin \{3, 4, 6, 8, 10, 12, 14, 18, 24\}$, or $a_1 = 2$ and $n \notin \{4, 6, 12\}$, or $a_1 = 3$ and $n \notin \{3, 6\}$, or $a_1 \geq 4$, then $\ell_n$ has at least one high primitive divisor.
\end{lemma}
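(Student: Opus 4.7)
The plan is to leverage the lower bound on $|\phi_n|$ from Lemma~\ref{lem:lower-bound-cyclotomic} against the upper bound implicit in Lemma~\ref{lem:product-of-primitive-divisors}, using the following structural input: it is classical that every odd primitive prime divisor $p$ of $\ell_n$ (for $n \geq 3$) satisfies $p \equiv \pm 1 \pmod{n}$, whence $p \geq n - 1$.

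Suppose $\ell_n$ has no high primitive divisor, so that every odd primitive divisor $p$ of $\ell_n$ satisfies $p^{\nu_p(\ell_n)} \leq n$. Together with $p \geq n-1$ this forces $\nu_p(\ell_n) = 1$ and $p = n-1$, and consequently there can be at most one such prime. Lemma~\ref{lem:product-of-primitive-divisors} then yields
$$|\phi_n| \,\leq\, P^\prime(n)\,(n-1) \,<\, n^2$$
for $n \geq 5$, $n \notin \{6, 12\}$, and $|\phi_{12}| \leq 6 \cdot 11 = 66$ in the exceptional case $n = 12$. Combining with $|\phi_n| > (4a_1^2)^{\varphi(n)/4}$ from Lemma~\ref{lem:lower-bound-cyclotomic} yields the inequality $\varphi(n)\log(4a_1^2) < 8\log n$ (and an analogous one for $n = 12$), which is violated as soon as $n$ exceeds an explicit threshold depending on $a_1$.

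Accordingly, for each of the values $a_1 \in \{1, 2, 3\}$ and for $a_1 \geq 4$ uniformly, I would determine the finitely many $n \geq 5$ compatible with the threshold inequality, and then verify each one individually via the explicit formulas of~\eqref{equ:small-lehmer}, the identity $\ell_6 = (a_1^2+1)\ell_3$, and the recurrence defining $(\ell_n)$. The cases $n \in \{3, 4, 6\}$, which fall outside the scope of Lemma~\ref{lem:product-of-primitive-divisors}, together with $n = 12$, must be handled by direct inspection along the lines of the proof of Lemma~\ref{lem:existence-of-primitive-divisor}; for instance, when $a_1 \geq 4$, one shows that $a_1^2 + 1$ has an odd prime factor $p \neq 3$ with $p^{\nu_p(\ell_6)} > 6$, producing a high primitive divisor of $\ell_6$.

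The main obstacle will be the meticulous bookkeeping required to match the exact exceptional sets $\{3, 4, 6, 8, 10, 12, 14, 18, 24\}$, $\{4, 6, 12\}$, and $\{3, 6\}$ announced in the statement: the analytic inequality only delimits a \emph{superset} of potentially bad $n$, and at each borderline $n$ the factorization of $\ell_n$ must be examined directly. The case $n = 12$ is particularly delicate, since the bound in Lemma~\ref{lem:product-of-primitive-divisors} there degrades from $P^\prime(n)$ to $6$, enlarging the range in which no contradiction is forced and correspondingly increasing the amount of manual checking, especially for the smaller values $a_1 \in \{1,2,3\}$.
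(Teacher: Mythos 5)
Your proposal is correct and follows essentially the same route as the paper: combining Ward's lower bound on $|\phi_n|$ with Lemma~\ref{lem:product-of-primitive-divisors} and the congruence $p \equiv \pm 1 \pmod n$ for primitive divisors to force a high primitive divisor once $(4a_1^2)^{\varphi(n)/4} \geq n^2$, then disposing of the finitely many remaining pairs $(a_1, n)$ by explicit computation, with $n \in \{3,4,6\}$ for $a_1 \geq 4$ handled by the modulo $8$ and modulo $9$ argument on $a_1^2+1$, $a_1^2+2$, $a_1^2+3$. The only cosmetic differences are that you argue by contraposition where the paper argues directly, and the paper makes the "explicit threshold" concrete via the bounds $\varphi(n) > n/6$ for $180 \leq n < 2\cdot 10^9$ and $\varphi(n) > n/\log n$ beyond.
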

\begin{proof}
    Suppose that $n \geq 5$ and $n \neq 6$.
	We claim that if $(2a_1)^{\varphi(n) / 2} > n^2$ then $\ell_n$ has a high primitive divisor.
	Indeed, recalling that $\gamma - \delta = a_1$ and $\gamma \delta = 1$, by Lemma~\ref{lem:lower-bound-cyclotomic} we have that $|\phi_n| > (2a_1)^{\varphi(n) / 2}$.
	Hence, by Lemma~\ref{lem:product-of-primitive-divisors}, we get that $(2a_1)^{\varphi(n) / 2} > n^2$ implies that
	\begin{equation}\label{equ:big-product}
		\prod_{q \text{ prim.\ div.\ } \ell_n} q^{\nu_q(\ell_n)} > n .
	\end{equation}
	In particular, from~\eqref{equ:big-product} it follows that $\ell_n$ has at least one primitive divisor.
	Let $p$ be the greatest primitive divisor of $\ell_n$.
	It is known that each primitive divisor $q$ of $\ell_n$ satisfies $q \equiv \pm 1 \pmod n$~\cite[p.~427]{MR0491445}.
	Hence, either $p \geq n + 1$, or $p = n - 1$ and $p$ is the unique primitive divisor of $\ell_n$.
	In both cases, taking into account~\eqref{equ:big-product}, we get that $p^{\nu_p(\ell_n)} > n$.
	Hence, we have that $p$ is a high primitive divisor of $\ell_n$ (note that $p$ is odd since $n \geq 5$).

	If $n \geq 2 \cdot 10^9$ then $\varphi(n) > n / \log n$ \cite[Lemma~4.1]{MR0071446}.
	Hence, we get that
	\begin{equation*}
		(2a_1)^{\varphi(n) / 2} > 2^{n / (2\log n)} > n^2 ,
	\end{equation*}
	which implies that $\ell_n$ has a high primitive divisor.

	If $180 \leq n  < 2 \cdot 10^9$, then $\varphi(n) > n / 6$ \cite[Lemma~4.2]{MR0071446}.
	Hence, we get that
	\begin{equation*}
		(2a_1)^{\varphi(n) / 2} > 2^{n / 12} > n^2 ,
	\end{equation*}
	which implies that $\ell_n$ has a high primitive divisor.

	If $5 \leq n \leq 179$ with $n \neq 6$, and $|a_1| \geq 7$, then
	\begin{equation*}
	(2a_1)^{\varphi(n) / 2} \geq 14^{\varphi(n) / 2} > n^2 ,
	\end{equation*}
	which implies that $\ell_n$ has a high primitive divisor.

	If $5 \leq n \leq 179$ with $n \neq 6$, and $a_1 \leq 6$, then with the aid of a computer one can check that $\ell_n$ has a high primitive divisor, unless $a_1 = 1$ and $n \in \{8, 10, 12, 14, 18, 24\}$, or $a_1 = 2$ and $n = 12$.
	Note that this verification is done more efficiently by factorizing $\ell_n$ only in the cases in which the inequality $(2a_1)^{\varphi(n) / 2} > n^2$ does not hold.

	If $a_1 = 2$ or $a_1 = 3$, then $\ell_3$ or $\ell_4$ has a high primitive divisor, respectively.
	Hereafter, assume that $n \in \{3, 4, 6\}$ and $a_1 \geq 4$.
	It remains to prove that $\ell_n$ has a high primitive divisor.
	This is done similarly to the end of the proof of Lemma~\ref{lem:existence-of-primitive-divisor}.
	Since $a_1 \geq 4$, by reasoning modulo $8$ and modulo $9$, it follows easily that $a_1^2 + 2$ is not equal to $2^s$ or $2^s 3$, for some integer $s \geq 0$, and that neither $a_1^2 + 1$ nor $a_1^2 + 3$ is equal to $2^s 3^t$ or $2^s 3^t 5$, for some integers $s, t \geq 0$.
	Hence, there exists a prime number $p \geq 5$ dividing $a_1^2 + 1$ and such that $p^{\nu_p(a_1^2 + 1)} > 6$.
	We claim that $p$ is a high primitive divisor of $\ell_6$.
	In fact, since $p \geq 5$, by \eqref{equ:small-lehmer} and recalling that $D = a_1^2 + 4$, we get that $p \mid \ell_6$, $p \nmid a_1 D \ell_1 \cdots \ell_5$, and $p^{\nu_p(\ell_6)} > 6$.
	The proof that $\ell_3$ and $\ell_4$ have a high primitive divisor proceeds similarly.
\end{proof}

\begin{remark}
	Lemma~\ref{lem:existence-of-primitive-divisor-high-power} is optimal, that is,
	one can verify that if $a_1 = 1$ and $n \in \{3, 4, 6, 8, 10, 12, 14, 18, 24\}$, or $a_1 = 2$ and $n \in \{4, 6, 12\}$, or $a_1 = 3$ and $n \in \{3, 6\}$, then $\ell_n$ has no high primitive divisor.
\end{remark}

For every odd prime number $p$, let $(D \mid p)$ denote the Legendre symbol.
Note that $(D \mid p) = 1$ if and only if $f$ splits modulo $p$.

\begin{lemma}\label{lem:legendre-D-p-equal-1}
	Let $n \geq 3$ be an odd integer and let $p$ be an odd prime factor of $\ell_n$.
	Then we have that $(D \mid p) = 1$.
\end{lemma}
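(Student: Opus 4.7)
The plan is to exploit the integer identity
\[
    (\gamma^n + \delta^n)^2 - (\gamma^n - \delta^n)^2 = 4(\gamma\delta)^n = 4,
\]
where the last equality uses $\gamma\delta = 1$. Since $n$ is odd, we have $\gamma^n - \delta^n = (\gamma-\delta)\ell_n = a_1\ell_n$, and moreover $(\gamma+\delta)$ divides $\gamma^n + \delta^n$ as a polynomial in $\gamma,\delta$, with quotient
\[
    u_n := \frac{\gamma^n + \delta^n}{\gamma+\delta} = \frac{\alpha^n - \beta^n}{\alpha - \beta}
\]
a rational integer (it is the $(P,Q)=(a_1,-1)$ companion Lucas sequence; one also sees directly that $u_n \in \mathbb{Z}[(\gamma+\delta)^2,\gamma\delta] = \mathbb{Z}[D,1] = \mathbb{Z}$ from the symmetry of $\gamma^n+\delta^n$ under $(\gamma,\delta)\mapsto(-\gamma,-\delta)$, which is valid precisely because $n$ is odd). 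Squaring $\gamma^n + \delta^n = (\gamma+\delta)u_n$ and using $(\gamma+\delta)^2 = D$, the identity above rewrites as the equality of integers
\[
    D\,u_n^2 - a_1^2\,\ell_n^2 = 4 .
\]

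Next I would reduce this modulo the odd prime $p$. Since $p \mid \ell_n$, the identity collapses to $D\,u_n^2 \equiv 4 \pmod{p}$. If $p$ also divided $u_n$, then $p$ would divide $4$, contradicting the oddness of $p$. Hence $u_n$ is invertible modulo $p$, and the congruence rearranges as
\[
    D \equiv (2\,u_n^{-1})^2 \pmod{p} ,
\]
exhibiting $D$ as a nonzero square modulo $p$, that is, $(D \mid p) = 1$.

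I do not anticipate a serious obstacle: the whole proof hinges on a single classical Lucas-/Lehmer-type identity, and once the integer equality $D\,u_n^2 - a_1^2\ell_n^2 = 4$ is in hand the conclusion is immediate. The only point deserving a moment's care is the integrality of the companion quantity $u_n$, and this is exactly where the hypothesis that $n$ is odd enters the argument: for even $n$ the analogous quotient is not in general divisible by $(\gamma+\delta)$, and the identity reduces to $D\,u_n^2 - a_1^2\ell_n^2 = \pm 4$ in a different auxiliary sequence, so the same Legendre-symbol conclusion would require an extra twist.
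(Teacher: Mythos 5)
Your proposal is correct and is essentially identical to the paper's proof: the quantity you call $u_n$ is the paper's $v_n = (\gamma^n+\delta^n)/(\gamma+\delta)$, the key identity $D\,u_n^2 - a_1^2\ell_n^2 = 4$ is the same one the paper uses, and the reduction modulo $p$ proceeds in the same way. The only cosmetic difference is how integrality of $u_n$ is justified (you invoke the companion Lucas sequence, the paper notes it is a symmetric function of $\alpha,\beta$), which amounts to the same fact.
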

\begin{proof}
	Since $n$ is odd, we have that $v_n := (\gamma^n + \delta^n) / (\gamma + \delta)$ is a symmetric expression of the algebraic integers $\alpha, \beta$ (recall that $\gamma = \alpha$ and $\delta = -\beta$).
	Hence, we get that $v_n$ is an integer.
	Furthermore, from $p \mid \ell_n$ and the identity
	\begin{equation*}
		D v_n^2 - a_1^2 \ell_n^2 = (\gamma + \delta)^2 v_n^2 - (\gamma - \delta)^2 \ell_n^2 = 4(\gamma\delta)^2 = 4,
	\end{equation*}
	we get that $D v_n^2 \equiv 2^2 \pmod p$.
	Since $p$ is odd, it follows that $p \nmid v_n$ and so $D \equiv (2 v_n^{-1})^2 \pmod p$.
	Thus $(D \mid p) = 1$, as desired.
\end{proof}

\subsection{Multiplicative orders}\label{sec:orders}

For every ideal $\mathfrak{i}$ of $\mathcal{O}_K$ and for every $\theta \in \mathcal{O}_K$, let $\ord_{\mathfrak{i}}(\theta)$ denote the multiplicative order of $\theta$ modulo $\mathfrak{i}$ (if it exists).

\begin{lemma}\label{lem:prim-div-order-alpha2}
	Let $n,v \geq 1$ be integers, let $p$ be a prime number not dividing $a_1 D$, and let $\mathfrak{p}$ be a prime ideal of $\mathcal{O}_K$ lying over $p$.
	Then $p^v \mid \ell_n$ if and only if $\ord_{\mathfrak{p}^v}(\alpha^2) \mid n$.
	In particular, we have that $p$ is a primitive divisor of $\ell_n$ if and only if $n = \ord_{\mathfrak{p}}(\alpha^2)$.
\end{lemma}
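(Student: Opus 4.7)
The plan is to turn divisibility by $p^v$ in $\mathbb{Z}$ into a congruence modulo $\mathfrak{p}^v$ in $\mathcal{O}_K$, and then rewrite $\gamma^n - \delta^n$ as (a unit times) $\alpha^{2n}-1$, so that the statement becomes exactly the definition of the order of $\alpha^2$.

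First I would observe that since $p \nmid D$, the prime $p$ is unramified in $K$, so the ramification index $e(\mathfrak{p}/p) = 1$ and hence $v_{\mathfrak{p}}(m) = v_p(m)$ for every $m \in \mathbb{Z}$. Thus $p^v \mid \ell_n$ is equivalent to $v_{\mathfrak{p}}(\ell_n) \geq v$. Next, the hypothesis $p \nmid a_1 D$ together with the relations $\gamma - \delta = a_1$, $(\gamma+\delta)^2 = D$, and $(\gamma^2-\delta^2)^2 = a_1^2 D$ gives $v_{\mathfrak{p}}(\gamma - \delta) = v_{\mathfrak{p}}(\gamma^2 - \delta^2) = 0$. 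Hence the definition~\eqref{equ:lehmer-definition} yields $v_{\mathfrak{p}}(\ell_n) = v_{\mathfrak{p}}(\gamma^n - \delta^n)$ in both parities.

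Second, I would bring $\alpha^2$ into the picture through the key identity
\begin{equation*}
    \alpha^n\,(\gamma^n - \delta^n) \;=\; \alpha^n\bigl(\alpha^n - (-\beta)^n\bigr) \;=\; \alpha^{2n} - (-1)^n(\alpha\beta)^n \;=\; \alpha^{2n} - 1,
\end{equation*}
which uses $\gamma = \alpha$, $\delta = -\beta$, and $\alpha\beta = -1$ (so $(\alpha\beta)^n = (-1)^n$). Since $\alpha$ is a unit in $\mathcal{O}_K$ (its norm is $\alpha\beta = -1$), we have $v_{\mathfrak{p}}(\alpha) = 0$, and therefore $v_{\mathfrak{p}}(\gamma^n - \delta^n) = v_{\mathfrak{p}}(\alpha^{2n} - 1)$. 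Combining the two steps,
\begin{equation*}
    p^v \mid \ell_n \;\Longleftrightarrow\; v_{\mathfrak{p}}(\alpha^{2n} - 1) \geq v \;\Longleftrightarrow\; \alpha^{2n} \equiv 1 \pmod{\mathfrak{p}^v} \;\Longleftrightarrow\; \ord_{\mathfrak{p}^v}(\alpha^2) \mid n,
\end{equation*}
which is the first claim.

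Finally, for the ``in particular'' part, the assumption $p \nmid a_1 D$ already gives $p \nmid (\gamma^2 - \delta^2)^2$, so by definition $p$ is a primitive divisor of $\ell_n$ iff $p \mid \ell_n$ and $p \nmid \ell_k$ for $1 \leq k < n$. Specializing the main equivalence to $v = 1$, this translates into $\ord_{\mathfrak{p}}(\alpha^2) \mid n$ but $\ord_{\mathfrak{p}}(\alpha^2) \nmid k$ for $k < n$, i.e., $n = \ord_{\mathfrak{p}}(\alpha^2)$. There is no real obstacle here; the only mildly delicate point is the uniform handling of the odd and even cases of $\ell_n$, which the substitution $\delta = -\beta$ absorbs into the single quantity $\gamma^n - \delta^n$.
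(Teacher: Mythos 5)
Your proof is correct and follows essentially the same route as the paper: reduce $p^v \mid \ell_n$ to the congruence $\gamma^n \equiv \delta^n \pmod{\mathfrak{p}^v}$ using the invertibility of the denominators in~\eqref{equ:lehmer-definition}, and then use $\gamma/\delta = \alpha/(-\beta) = \alpha^2$ to restate this as $(\alpha^2)^n \equiv 1 \pmod{\mathfrak{p}^v}$. You merely spell out a few details (unramifiedness, the unit $\alpha$, the ``in particular'' step) that the paper leaves implicit.
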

\begin{proof}
	Since $p \nmid a_1 D$, we have that $\alpha + \beta$ and $\alpha - \beta$ are invertible modulo $\mathfrak{p}^v$.
	Hence, using~\eqref{equ:lehmer-definition}, we get that $p^v \mid \ell_n$ is equivalent to $\alpha^n \equiv (-\beta)^n \pmod{\mathfrak{p}^v}$, which in turn is equivalent to $(\alpha^2)^n \equiv 1 \pmod{\mathfrak{p}^v}$, since $\alpha / (-\beta) = \alpha^2$.
	Thus the claim follows.
\end{proof}

\begin{lemma}\label{lem:sqrt-1}
	Let $p$ be an odd prime number, let $\mathfrak{p}$ be a prime ideal of $\mathcal{O}_K$ lying over $p$, and let $v \geq 1$ be an integer.
	Then the equation $X^2 \equiv 1 \pmod {\mathfrak{p}^v}$ has exactly two solutions modulo $\mathfrak{p}^v$, namely $1$ and $-1$.
\end{lemma}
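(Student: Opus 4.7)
The plan is to use the factorization $X^2 - 1 = (X-1)(X+1)$ together with the fact that $2$ is a unit modulo $\mathfrak{p}$, which follows from $p$ being odd. First, note that $1$ and $-1$ are solutions, and they are distinct modulo $\mathfrak{p}^v$: otherwise $\mathfrak{p}^v$ would divide $2$, contradicting the fact that $\mathfrak{p}$ lies over the odd prime $p$ (so $\nu_{\mathfrak{p}}(2) = 0$).

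For uniqueness, suppose $x \in \mathcal{O}_K$ satisfies $x^2 \equiv 1 \pmod{\mathfrak{p}^v}$, that is, $\nu_{\mathfrak{p}}\!\left((x-1)(x+1)\right) \geq v$, so
\begin{equation*}
\nu_{\mathfrak{p}}(x-1) + \nu_{\mathfrak{p}}(x+1) \geq v .
\end{equation*}
If both $\nu_{\mathfrak{p}}(x-1) \geq 1$ and $\nu_{\mathfrak{p}}(x+1) \geq 1$, then $\mathfrak{p}$ would also divide the difference $(x+1) - (x-1) = 2$, contradicting $\nu_{\mathfrak{p}}(2) = 0$. Hence one of the two valuations is zero, and consequently the other is at least $v$, so either $x \equiv 1 \pmod{\mathfrak{p}^v}$ or $x \equiv -1 \pmod{\mathfrak{p}^v}$.

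I don't expect any real obstacle here: the only thing to be careful about is that $\mathcal{O}_K / \mathfrak{p}^v$ is not a field when $v \geq 2$, so one cannot directly invoke ``a polynomial of degree $2$ has at most $2$ roots in a field''; the valuation-theoretic argument above bypasses this by working with $\nu_{\mathfrak{p}}$ in the Dedekind domain $\mathcal{O}_K$.
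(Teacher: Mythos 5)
Your proof is correct and follows essentially the same route as the paper: rewrite the congruence as $\nu_{\mathfrak{p}}(x-1)+\nu_{\mathfrak{p}}(x+1)\geq v$, note that both valuations cannot be positive since $2\notin\mathfrak{p}$, and conclude. The only (minor) addition is your explicit check that $1\not\equiv -1\pmod{\mathfrak{p}^v}$, which the paper leaves implicit.
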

\begin{proof}
	The equation $X^2 \equiv 1 \pmod {\mathfrak{p}^v}$ is equivalent to  $\nu_{\mathfrak{p}}(X - 1) + \nu_{\mathfrak{p}}(X + 1) \geq v$,
	where $\nu_{\mathfrak{p}}$ is the $\mathfrak{p}$-adic valuation over $\mathcal{O}_K$.
	Moreover, we have that $\nu_{\mathfrak{p}}(X - 1)$ and $\nu_{\mathfrak{p}}(X + 1)$ cannot both be positive, since $2 \notin \mathfrak{p}$.
	Hence, either $X \equiv 1 \pmod {\mathfrak{p}^v}$ or $X \equiv -1 \pmod {\mathfrak{p}^v}$.
\end{proof}

\begin{lemma}\label{lem:orders}
	Let $p$ be an odd prime number, let $\mathfrak{p}$ be a prime ideal of $\mathcal{O}_K$ lying over $p$, and let $v \geq 1$ be an integer.
	Put $a := \ord_{\mathfrak{p}^v}(\alpha)$, $b := \ord_{\mathfrak{p}^v}(\beta)$, and $c := \ord_{\mathfrak{p}^v}(\alpha^2)$.
	Then the following statements hold.
	\begin{enumerate}
		\item\label{ite:orders:1} If $c$ is odd then $a = c$ and $b = 2c$, or $a = 2c$ and $b = c$.
		\item\label{ite:orders:2} If $c$ is even then $a = b = 2c$.
		\item\label{ite:orders:3} $\lcm(a, b) = 2c$.
	\end{enumerate}
\end{lemma}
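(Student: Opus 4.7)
The plan is to first pin down the two possible values of $a$ and $b$ using divisibility, and then use the relation $\beta \equiv -\alpha^{-1} \pmod{\mathfrak{p}^v}$ together with Lemma~\ref{lem:sqrt-1} to determine which value occurs in each case.

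First I would establish that $a, b \in \{c, 2c\}$. From $\alpha^a \equiv 1 \pmod{\mathfrak{p}^v}$ one gets $(\alpha^2)^a \equiv 1$, so $c \mid a$; conversely $(\alpha^2)^c = \alpha^{2c} \equiv 1$ gives $a \mid 2c$, so $a \in \{c, 2c\}$. The same argument applied to $\beta^2$ (whose order equals $c$ because $\beta^2 \equiv \alpha^{-2}$) yields $b \in \{c, 2c\}$. Now by Lemma~\ref{lem:sqrt-1}, applied to $X = \alpha^c$ (which squares to $1$), we have $\alpha^c \equiv \pm 1 \pmod{\mathfrak{p}^v}$, and since $\beta \equiv -\alpha^{-1}$ it follows that
\begin{equation*}
    \beta^c \equiv (-1)^c \alpha^{-c} \equiv \pm (-1)^c \pmod{\mathfrak{p}^v},
\end{equation*}
with the sign matching that of $\alpha^c$.

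For \ref{ite:orders:2}, suppose $c$ is even. If we had $\alpha^c \equiv 1$, then $(\alpha^2)^{c/2} \equiv 1$, contradicting the minimality of $c = \ord_{\mathfrak{p}^v}(\alpha^2)$. So $\alpha^c \equiv -1$, which forces $a = 2c$, and moreover $\beta^c \equiv (-1)^c \cdot (-1) = -1$, which forces $b = 2c$ as well.

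For \ref{ite:orders:1}, suppose $c$ is odd. Then $\beta^c \equiv -\alpha^{-c}$, so exactly one of $\alpha^c, \beta^c$ is congruent to $1$ and the other to $-1$ modulo $\mathfrak{p}^v$. In the subcase $\alpha^c \equiv 1$ we get $a = c$ and $\beta^c \equiv -1$, forcing $b = 2c$; in the subcase $\alpha^c \equiv -1$ we get $a = 2c$ and $\beta^c \equiv 1$, forcing $b = c$. Finally, \ref{ite:orders:3} is immediate from \ref{ite:orders:1} and \ref{ite:orders:2}, since in both cases the multiset $\{a, b\}$ contains $2c$ and has $\lcm$ equal to $2c$. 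The only subtle point, which I would flag as the main obstacle, is the minimality argument used in the even case to rule out $\alpha^c \equiv 1$; everything else reduces to bookkeeping with the identity $\beta \equiv -\alpha^{-1}$.
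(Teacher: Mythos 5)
Your proof is correct and follows essentially the same route as the paper's: establish $a, b \in \{c, 2c\}$ via divisibility, use Lemma~\ref{lem:sqrt-1} to get $\alpha^c \equiv \pm 1$, and then exploit $\beta \equiv -\alpha^{-1}$ (equivalently $\alpha^c\beta^c \equiv (-1)^c$) together with the minimality of $c$ in the even case to pin down the signs. The only cosmetic difference is that you spell out the minimality argument ruling out $\alpha^c \equiv 1$ for $c$ even, which the paper states more tersely.
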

\begin{proof}
	Hereafter, all congruences are modulo $\mathfrak{p}^v$.
	Since $\alpha^{2c} \equiv 1$, we have that $a \mid 2c$ and, by Lemma~\ref{lem:sqrt-1}, that $\alpha^c \equiv \pm 1$.
	Moreover, from $\alpha^a \equiv 1$, we have that $\alpha^{2a} \equiv 1$, and so $c \mid a$.
	Hence, we get that $a = c$ if $\alpha^c \equiv 1$, and $a = 2c$ if $\alpha^c \equiv -1$.

	Using the fact that $\beta^2 = \alpha^{-2}$, we get that $c = \ord_{\mathfrak{p}^v}(\beta^2)$.
	Hence, by a reasoning similar to before, we have that $\beta^c \equiv \pm 1$, while $b = c$ if $\beta^c \equiv 1$, and $b = 2c$ if $\beta^c \equiv -1$.

	Suppose that $c$ is odd.
	Since $\alpha\beta = -1$, we have that $\alpha^c \beta^c \equiv (-1)^c \equiv -1$.
	Hence, either $\alpha^c \equiv 1$ and $\beta^c \equiv -1$, or $\alpha^c \equiv -1$ and $\beta^c \equiv 1$.
	By the previous considerations, it follows that either $a = c$ and $b = 2c$, or $a = 2c$ and $b = c$.
	This proves \ref{ite:orders:1}.

	Suppose that $c$ is even.
	Then, by the minimality of $c$, we get that $\alpha^c \equiv \beta^c \equiv -1$.
	Hence, by the previous considerations, we have that $a = b = 2c$.
	This proves \ref{ite:orders:2}.

	Finally, we obtain \ref{ite:orders:3} as a direct consequence of \ref{ite:orders:1} and \ref{ite:orders:2}.
\end{proof}

\subsection{Second order linear recurrences}\label{sec:second-order-recurrences}

In this section, we collect some results on linear recurrences in $\mathcal{L}(f)$.

\begin{lemma}\label{lem:binet}
	Let $\bm{x} \in \mathcal{L}(f)$.
	Then we have that
	\begin{equation*}
		x_n = \frac{(x_1 - \beta x_0) \alpha^n - (x_1 - \alpha x_0) \beta^n}{\alpha - \beta}
	\end{equation*}
	for every integer $n \geq 0$.
\end{lemma}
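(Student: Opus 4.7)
The plan is to verify the classical Binet formula by induction, using that $\alpha,\beta$ are the roots of $f = X^2 - a_1 X - 1$. Define the auxiliary sequence
\begin{equation*}
    y_n := \frac{(x_1 - \beta x_0)\alpha^n - (x_1 - \alpha x_0)\beta^n}{\alpha - \beta}, \qquad n \geq 0,
\end{equation*}
which is well-defined since $\alpha \neq \beta$ (as $D$ is not a square in $\mathbb{Q}$). The goal is to show $y_n = x_n$ for every $n \geq 0$.

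The base cases $n = 0$ and $n = 1$ follow by direct substitution: for $n = 0$ the numerator simplifies to $(\alpha-\beta)x_0$, and for $n = 1$ the cross terms involving $\alpha\beta x_0$ cancel, leaving $(\alpha - \beta)x_1$. For the inductive step, I would observe that, since $\alpha$ and $\beta$ satisfy $\alpha^2 = a_1\alpha + 1$ and $\beta^2 = a_1\beta + 1$, one has $\alpha^n = a_1 \alpha^{n-1} + \alpha^{n-2}$ and $\beta^n = a_1\beta^{n-1} + \beta^{n-2}$ for every $n \geq 2$. Plugging these identities into the formula for $y_n$ shows that $y_n = a_1 y_{n-1} + y_{n-2}$, i.e. $(y_n)$ satisfies the same recurrence~\eqref{equ:recurrence} as $(x_n)$. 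Since both sequences agree on the two initial values and obey the same second-order recurrence, they coincide.

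There is no real obstacle here; this is a textbook computation. The only minor point is to note that the formula is a genuine identity in $K$ rather than just in $\mathbb{R}$, but this is immediate since $\alpha - \beta \in K^\times$ and the numerator is a $\mathbb{Z}$-linear combination of algebraic integers, so all operations are valid in $K$.
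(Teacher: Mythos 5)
Your proof is correct and follows exactly the route the paper indicates: the paper's proof simply cites the general theory of linear recurrences as generalized power sums and notes the formula "can be easily proven by induction," which is precisely the induction you carry out (base cases $n=0,1$ plus the observation that both sides satisfy the same second-order recurrence because $\alpha,\beta$ are roots of $f$). No issues.
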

\begin{proof}
	This is a special case of the general expression of linear recurrences as generalized power sums~\cite[Sec.~1.1.6]{MR1990179} and can be easily proven by induction.
\end{proof}

\begin{lemma}\label{lem:period}
	Let $\bm{x} \in \mathcal{L}(f)$, let $p$ be a prime number not dividing $2(x_1^2 - a_1 x_1 x_0 - x_0^2) D$, let $\mathfrak{p}$ be a prime ideal of $\mathcal{O}_K$ lying over $p$, and let $v \geq 1$ be an integer.
	Then we have that $\tau(\bm{x}; p^v) = 2\ord_{\mathfrak{p}^v}(\alpha^2)$.
\end{lemma}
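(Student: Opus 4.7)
The plan is to translate the period condition modulo $p^v$ into a condition on $\alpha^t$ and $\beta^t$ in $\mathcal{O}_K/p^v\mathcal{O}_K$ via the Binet formula, and then invoke Lemma~\ref{lem:orders}. Set $t := \tau(\bm{x}; p^v)$ and $c := \ord_{\mathfrak{p}^v}(\alpha^2)$; the target is $t = 2c$. Since the constant term of $f$ is $-1$, $\bm{x}$ is purely periodic modulo $p^v$, so both $x_t \equiv x_0$ and $x_{t+1} \equiv x_1 \pmod{p^v}$.

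First, I would show $2c \mid t$. Applying Lemma~\ref{lem:binet} to $x_t - x_0$ and $x_{t+1} - x_1$ and clearing the denominator $\alpha - \beta$ yields
\begin{align*}
(x_1 - \beta x_0)(\alpha^t - 1) &\equiv (x_1 - \alpha x_0)(\beta^t - 1) \pmod{p^v\mathcal{O}_K}, \\
(x_1 - \beta x_0)\alpha(\alpha^t - 1) &\equiv (x_1 - \alpha x_0)\beta(\beta^t - 1) \pmod{p^v\mathcal{O}_K}.
\end{align*}
Multiplying the first line by $\beta$ and subtracting from the second cancels the $\beta^t - 1$ term and leaves $(x_1 - \beta x_0)(\alpha-\beta)(\alpha^t - 1) \in p^v\mathcal{O}_K$, and multiplying the first by $\alpha$ and subtracting gives $(x_1 - \alpha x_0)(\alpha-\beta)(\beta^t - 1) \in p^v\mathcal{O}_K$. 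The hypothesis $p \nmid 2(x_1^2 - a_1 x_0 x_1 - x_0^2)D$ ensures that $(x_1 - \beta x_0)$, $(x_1 - \alpha x_0)$, and $\alpha-\beta$ all have norms coprime to $p$, hence are coprime to $p$ in $\mathcal{O}_K$. Cancelling them yields $\alpha^t \equiv \beta^t \equiv 1 \pmod{\mathfrak{p}^v}$, and Lemma~\ref{lem:orders}\ref{ite:orders:3} gives $2c = \lcm(\ord_{\mathfrak{p}^v}(\alpha), \ord_{\mathfrak{p}^v}(\beta)) \mid t$.

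For the reverse divisibility $t \mid 2c$, I would verify that $2c$ is itself a period. By definition $\alpha^{2c} \equiv 1 \pmod{\mathfrak{p}^v}$, and since $\alpha\beta = -1$ one has $\beta^{2c} = \alpha^{-2c} \equiv 1 \pmod{\mathfrak{p}^v}$; Galois conjugation shows $\ord_{\mathfrak{p}'^v}(\alpha^2) = \ord_{\mathfrak{p}^v}(\beta^2) = c$ for the other prime $\mathfrak{p}'$ above $p$ in the split case (and $\mathfrak{p}' = \mathfrak{p}$ in the inert case), so in fact $\alpha^{2c} - 1, \beta^{2c} - 1 \in p^v\mathcal{O}_K$. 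Feeding this into the Binet formula gives $(\alpha-\beta)(x_{n+2c} - x_n) \in p^v\mathcal{O}_K$ for every $n \geq 0$, and the coprimality of $\alpha-\beta$ with $p$ lets me cancel to conclude $x_{n+2c} - x_n \in p^v\mathcal{O}_K \cap \mathbb{Z} = p^v\mathbb{Z}$, so $t \mid 2c$. The only subtle point is bridging congruences modulo the single prime $\mathfrak{p}^v$ with congruences modulo $p^v\mathcal{O}_K$ in the split case, and this is handled uniformly by the Galois symmetry just noted together with the coprimality of $\alpha-\beta$ and $p$.
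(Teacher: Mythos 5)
Your proof is correct and follows essentially the same route as the paper: Binet's formula together with the invertibility modulo $p$ of $x_1 - \beta x_0$, $x_1 - \alpha x_0$ and $\alpha - \beta$ (guaranteed by the hypothesis, since their norms are $\pm(x_1^2 - a_1 x_0 x_1 - x_0^2)$ and $-D$) reduces the period to $\lcm\!\big(\!\ord_{\mathfrak{p}^v}(\alpha), \ord_{\mathfrak{p}^v}(\beta)\big)$, which Lemma~\ref{lem:orders}\ref{ite:orders:3} identifies with $2\ord_{\mathfrak{p}^v}(\alpha^2)$. The paper packages your explicit elimination as the inversion of a $2\times 2$ matrix and works modulo $\mathfrak{p}^v$ throughout (using $\mathfrak{p}^v \cap \mathbb{Z} = p^v\mathbb{Z}$), which makes your Galois-conjugation step for the converse divisibility unnecessary, but the content is the same.
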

\begin{proof}
	The claim can be derived from more general results on the period of linear recurrences modulo integers \cite[Sec.~3.1]{MR1990179}, but we provide a short proof for completeness.

	Define the matrix
	\begin{equation*}
		M := \begin{pmatrix}
				(x_1 - \beta x_0) & -(x_1 - \alpha x_0) \\
				(x_1 - \beta x_0)\alpha & -(x_1 - \alpha x_0)\beta \\
			\end{pmatrix}
	\end{equation*}
	and note that
	\begin{equation*}
		\det(M) = (x_1^2 - a_1 x_1 x_0 - x_0^2) (\alpha - \beta) .
	\end{equation*}
	Since $p \nmid (x_1^2 - a_1 x_1 x_0 - x_0^2) D$, we get that $\alpha - \beta$ and $M$ are invertible modulo $\mathfrak{p}^v$.
	Hence, by Lemma~\ref{lem:binet}, for every integer $n \geq 1$, we have that
	\begin{align*}
		&\begin{cases}
			x_n &\!\!\!\!\equiv x_0 \\
			x_{n+1} &\!\!\!\!\equiv x_1 \\
		\end{cases} \pmod{p^v}
		\quad\Longleftrightarrow\quad
		M \begin{pmatrix}
			\alpha^n \\
			\beta^n
		\end{pmatrix}
		\equiv
		(\alpha - \beta)\begin{pmatrix}
			x_0 \\
			x_1
		\end{pmatrix} \pmod{\mathfrak{p}^v} \\
		&\quad\Longleftrightarrow\quad
		\begin{pmatrix}
			\alpha^n \\
			\beta^n
		\end{pmatrix}
		\equiv
		M^{-1}(\alpha - \beta)\begin{pmatrix}
			x_0 \\
			x_1
		\end{pmatrix} \pmod{\mathfrak{p}^v}
		\quad\Longleftrightarrow\quad
		\begin{pmatrix}
			\alpha^n \\
			\beta^n
		\end{pmatrix}
		\equiv
		\begin{pmatrix}
			1 \\
			1
		\end{pmatrix} \pmod{\mathfrak{p}^v} .
	\end{align*}
	Therefore, we get that $\tau(\bm{x}; p^v) = \lcm\!\big(\!\ord_{\mathfrak{p}^v}(\alpha), \ord_{\mathfrak{p}^v}(\beta)\big)$.
	Then, since $p$ is odd, the claim follows from Lemma~\ref{lem:orders}\ref{ite:orders:3}.
\end{proof}

\begin{lemma}\label{lem:case-n-odd-D-p-equal-1}
	Let $n \geq 3$ be integer.
	Suppose that $p$ is an odd primitive divisor of $\ell_n$ such that $(D \mid p) = 1$.
	Then there exists $\bm{x} \in \mathcal{L}(f)$ such that $\tau(\bm{x}; p) = \rho(\bm{x}; p) = 2n$ and all the residues of $\bm{x}$ modulo $p$ are nonzero.
	Moreover, if $n$ is odd, then there exists $\bm{y} \in \mathcal{L}(f)$ such that $\tau(\bm{y}; p) = \rho(\bm{y}; p) = n$ and all the residues of $\bm{y}$ modulo $p$ are nonzero.
\end{lemma}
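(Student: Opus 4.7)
The plan is to construct the required sequences as integer lifts of pure geometric progressions inside $\mathcal{O}_K/\mathfrak{p}$, exploiting the splitting hypothesis $(D \mid p) = 1$. First I would set things up: since $p$ is a primitive divisor of $\ell_n$ with $n \geq 3$, the very definition of primitive divisor gives $p \nmid a_1^2 D$, and in particular $p \nmid a_1 D$. Because $(D \mid p) = 1$, the polynomial $f$ splits modulo $p$ and $p$ splits in $\mathcal{O}_K$, so for any prime $\mathfrak{p}$ of $\mathcal{O}_K$ lying over $p$ the canonical map $\mathbb{Z}/p\mathbb{Z} \to \mathcal{O}_K/\mathfrak{p}$ is an isomorphism onto $\mathbb{F}_p$. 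Fix one such $\mathfrak{p}$. By Lemma~\ref{lem:prim-div-order-alpha2} we have $\ord_\mathfrak{p}(\alpha^2) = n$, and by Lemma~\ref{lem:orders} at least one $\theta \in \{\alpha, \beta\}$ satisfies $\ord_\mathfrak{p}(\theta) = 2n$ (both, if $n$ is even; exactly one, if $n$ is odd, the other having order $n$).

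Next, I would pick an integer $\theta_0$ with $\theta_0 \equiv \theta \pmod \mathfrak{p}$ (which exists by the isomorphism above) and define $\bm{x} \in \mathcal{L}(f)$ by $x_0 := 1$ and $x_1 := \theta_0$. A straightforward induction on $k$, using the recurrence $x_{k+2} = a_1 x_{k+1} + x_k$ together with the identity $\theta^2 = a_1 \theta + 1$ coming from $f(\theta) = 0$, then gives $x_k \equiv \theta^k \pmod \mathfrak{p}$ for every $k \geq 0$. Under the identification $\mathbb{Z}/p\mathbb{Z} \cong \mathcal{O}_K/\mathfrak{p}$, the set of residues of $\bm{x}$ modulo $p$ thus equals $\{\theta^k \bmod \mathfrak{p} : k \geq 0\}$, a set of $\ord_\mathfrak{p}(\theta) = 2n$ distinct elements of the unit group (the element $\theta$ is a unit modulo $\mathfrak{p}$ because $\alpha\beta = -1$). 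Hence $\rho(\bm{x}; p) = 2n$ and all residues are nonzero; moreover any period $t$ of $\bm{x}$ modulo $p$ forces $\theta^t \equiv 1 \pmod \mathfrak{p}$, yielding $\tau(\bm{x}; p) = 2n$.

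For the \emph{moreover} statement with $n$ odd, Lemma~\ref{lem:orders}\ref{ite:orders:1} ensures that the other element of $\{\alpha, \beta\}$ has order exactly $n$ modulo $\mathfrak{p}$; repeating the same lifting construction with this element in place of $\theta$ produces the desired $\bm{y}$ with $\tau(\bm{y}; p) = \rho(\bm{y}; p) = n$ and no zero residues. The only real conceptual subtlety is recognising that one should not try to analyse the generic Binet form $A\alpha^k + C\beta^k$ with both $A, C$ nonzero (which would entangle the argument in controlling collisions and vanishing values, and actually fails when $p = 2n + 1$ and $n$ is odd); instead, by setting one of the Binet coefficients to zero modulo $\mathfrak{p}$ one reduces the problem to a clean geometric progression whose period, residue count, and nonvanishing are all read off directly from $\ord_\mathfrak{p}(\theta)$. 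The hypothesis $(D \mid p) = 1$ is used essentially here, as it is exactly what allows $\theta \bmod \mathfrak{p}$ to be represented by a rational integer $\theta_0$.
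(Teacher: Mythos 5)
Your proposal is correct and follows essentially the same route as the paper's own proof: both use Lemma~\ref{lem:prim-div-order-alpha2} to get $\ord_{\mathfrak{p}}(\alpha^2)=n$, use the splitting of $f$ modulo $p$ (from $(D\mid p)=1$) to lift a root to an integer, invoke Lemma~\ref{lem:orders} to find a root of order $2n$ (resp.\ $n$ when $n$ is odd), and take the geometric progression $x_0=1$, $x_1=\theta_0$ so that $x_k\equiv\theta^k\pmod{\mathfrak{p}}$. The only cosmetic difference is that you phrase the lift via the isomorphism $\mathbb{Z}/p\mathbb{Z}\cong\mathcal{O}_K/\mathfrak{p}$, whereas the paper directly picks integers $a,b$ congruent to $\alpha,\beta$ modulo $\mathfrak{p}$.
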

\begin{proof}
	Let $\mathfrak{p}$ be a prime ideal of $\mathcal{O}_K$ lying over $p$.
	Since $p$ is a primitive divisor of $\ell_n$, by Lemma~\ref{lem:prim-div-order-alpha2} we have that $\ord_{\mathfrak{p}}(\alpha^2) = n$.
	Since $p$ is odd and $(D \mid p) = 1$, we have that $f$ splits modulo $p$, that is, there exist integers $a,b$ such that $a \equiv \alpha \pmod{\mathfrak{p}}$ and $b \equiv \beta \pmod{\mathfrak{p}}$.
	In particular, we have that $\ord_p(a) = \ord_{\mathfrak{p}}(\alpha)$ and $\ord_p(b) = \ord_{\mathfrak{p}}(\beta)$, where $\ord_p$ denotes the multiplicative order modulo $p$.
	By Lemma~\ref{lem:orders}\ref{ite:orders:1}--\ref{ite:orders:2}, we get that $\ord_p(a) = 2n$ or $\ord_p(b) = 2n$.
	By swapping $a$ and $b$, we can assume that $\ord_p(a) = 2n$.
	Let $\bm{x} \in \mathcal{L}(f)$ such that $x_0 = 1$ and $x_1 = a$.
	Since $f(a) \equiv 0 \pmod p$, it follows easily by induction that $x_n \equiv a^n \pmod p$ for every integer $n \geq 0$.
	Hence, we get that $\tau(\bm{x}; p) = \rho(\bm{x}; p) = 2n$ and all the residues of $\bm{x}$ modulo $p$ are nonzero, as desired.

	If $n$ is odd, then by Lemma~\ref{lem:orders}\ref{ite:orders:1}, we have that $\ord_p(b) = n$.
	Let $\bm{y} \in \mathcal{L}(f)$ such that $y_0 = 1$ and $y_1 = b$.
	Then, reasoning as for $\bm{x}$, we get that $\tau(\bm{y}; p) = \rho(\bm{y}; p) = n$ and all the residues of $\bm{y}$ modulo $p$ are nonzero, as desired.
\end{proof}

\begin{lemma}\label{lem:zero-term}
    Let $\bm{x} \in \mathcal{L}(f)$ with $x_0 = 1$, let $p$ be a prime number not dividing $D$, let $\mathfrak{p}$ be a prime ideal of $\mathcal{O}_K$ lying over $p$, let $v \geq 1$ be an integer, let $n := \ord_{\mathfrak{p}}(\alpha^2)$, and let $s \geq 0$ be an integer.
    Then we have that $x_s \equiv 0 \pmod{p^v}$ if and only if
    \begin{equation}\label{equ:zero-term:1}
        x_1 \equiv \frac{\beta - \alpha y_s}{1 - y_s} \pmod {\mathfrak{p}^v}
    \end{equation}
    and $1 - y_s$ is invertible modulo $\mathfrak{p}^v$, where $y_s := (-\alpha^2)^{-s}$.
\end{lemma}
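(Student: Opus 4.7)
The plan is to apply Binet's formula and reduce the condition $x_s \equiv 0 \pmod{p^v}$ to a linear equation in $x_1$ over $\mathcal{O}_K/\mathfrak{p}^v$, then verify that the invertibility clause falls out of a short argument using $p \nmid D$.

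First, since $x_0 = 1$, Lemma~\ref{lem:binet} gives $(\alpha - \beta)x_s = (x_1 - \beta)\alpha^s - (x_1 - \alpha)\beta^s$. Because $p \nmid D$, the prime $p$ is unramified in $\mathcal{O}_K$ (the different divides $D$), and by Galois invariance together with $x_s \in \mathbb{Z}$, one has $p^v \mid x_s$ if and only if $\mathfrak{p}^v \mid x_s$. Also $\alpha - \beta$ is invertible modulo $\mathfrak{p}^v$, since $(\alpha - \beta)^2 = D$ is a unit mod $\mathfrak{p}$. Hence $x_s \equiv 0 \pmod{p^v}$ is equivalent to
\begin{equation*}
    (x_1 - \beta)\alpha^s \equiv (x_1 - \alpha)\beta^s \pmod{\mathfrak{p}^v}.
\end{equation*}

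Next, since $\alpha\beta = -1$, both $\alpha$ and $\beta$ are units in $\mathcal{O}_K$. Dividing by $\alpha^s$ and using $(\beta/\alpha)^s = (-\alpha^{-2})^s = (-\alpha^2)^{-s} = y_s$, the above becomes
\begin{equation*}
    x_1(1 - y_s) \equiv \beta - \alpha y_s \pmod{\mathfrak{p}^v}. \tag{$\ast$}
\end{equation*}
If $1 - y_s$ is invertible modulo $\mathfrak{p}^v$, then $(\ast)$ is clearly equivalent to \eqref{equ:zero-term:1}; this handles the ``if'' direction.

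For the ``only if'' direction, it only remains to show that $x_s \equiv 0 \pmod{p^v}$ forces $1 - y_s$ to be invertible modulo $\mathfrak{p}^v$. Suppose, toward a contradiction, that $1 - y_s \in \mathfrak{p}$, i.e.\ $y_s \equiv 1 \pmod{\mathfrak{p}}$. Reducing $(\ast)$ modulo $\mathfrak{p}$, the left-hand side vanishes, so $\beta - \alpha y_s \equiv \beta - \alpha \equiv 0 \pmod{\mathfrak{p}}$. But $(\beta - \alpha)^2 = D$ is coprime to $p$, hence $\beta - \alpha \notin \mathfrak{p}$, a contradiction.

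The only mildly delicate point is the passage from divisibility by $p^v$ in $\mathbb{Z}$ to divisibility by $\mathfrak{p}^v$ in $\mathcal{O}_K$, which is why the hypothesis $p \nmid D$ (guaranteeing unramifiedness) is essential; everything else is bookkeeping with Binet's formula and units.
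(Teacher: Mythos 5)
Your proof is correct and follows essentially the same route as the paper: Binet's formula reduces $x_s \equiv 0 \pmod{p^v}$ to the linear congruence $x_1(1 - y_s) \equiv \beta - \alpha y_s \pmod{\mathfrak{p}^v}$, and the invertibility of $1 - y_s$ is forced by contradiction using $p \nmid D$. Your explicit justification of the equivalence between $p^v \mid x_s$ and $\mathfrak{p}^v \mid x_s$ via unramifiedness is a detail the paper leaves implicit, but it does not change the argument.
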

\begin{proof}
    Since $p \nmid D$, we get that $\alpha - \beta$ is invertible modulo $\mathfrak{p}^v$.
    Hence, employing Lemma~\ref{lem:binet} and the fact that $\beta = - \alpha^{-1}$, it follows easily that $x_s \equiv 0 \pmod{p^v}$ is equivalent to
    \begin{equation}\label{equ:zero-term:2}
        x_1 (1 - y_s) \equiv \beta - \alpha y_s \pmod {\mathfrak{p}^v} .
    \end{equation}
	We claim that if \eqref{equ:zero-term:2} holds then $1 - y_s$ is invertible modulo $\mathfrak{p}^v$.
    Indeed, if $1 - y_s$ is not invertible modulo $\mathfrak{p}^v$, then $y_s \equiv 1 \pmod{\mathfrak{p}}$, and so \eqref{equ:zero-term:2} implies that $\alpha - \beta \equiv 0 \pmod{\mathfrak{p}}$, which is a contradiction, since $p \nmid D$.
	Hence, we have that $1 - y_s$ is invertible modulo $\mathfrak{p}^v$.
    Consequently, we get that \eqref{equ:zero-term:2} is equivalent to \eqref{equ:zero-term:1} and $1 - y_s$ being invertible modulo $\mathfrak{p}^v$, as desired.
\end{proof}

\begin{lemma}\label{lem:coincidence}
	Let $\bm{x} \in \mathcal{L}(f)$ with $x_0 = 1$, let $p$ be a prime number not dividing $D$, let $\mathfrak{p}$ be a prime ideal of $\mathcal{O}_K$ lying over $p$, let $v \geq 1$ be an integer, let $n := \ord_{\mathfrak{p}}(\alpha^2)$, and let $s, t$ be integers such that $0 \leq s < t < 2n$ and $s \equiv t \pmod 2$.
	Then we have that $x_s \equiv x_t \pmod{p^v}$ if and only if
	\begin{equation}\label{equ:coincidence:5}
		x_1 \equiv \frac{\beta - \alpha z_{s,t}}{1 - z_{s,t}} \pmod {\mathfrak{p}^v}
	\end{equation}
	and $1 - z_{s,t}$ is invertible modulo $\mathfrak{p}^v$, where $z_{s,t} := (-1)^{t + 1} \alpha^{-(s + t)}$.
\end{lemma}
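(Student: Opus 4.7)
The plan is to reduce the coincidence $x_s \equiv x_t \pmod{p^v}$ to a single linear congruence in $x_1$ modulo $\mathfrak{p}^v$, and then to recycle the endgame of Lemma~\ref{lem:zero-term} in order to handle the possible non-invertibility of $1 - z_{s,t}$.

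First, I would plug $x_0 = 1$ into the Binet formula (Lemma~\ref{lem:binet}) and subtract to obtain an expression for $x_s - x_t$. Since $p \nmid D$, the prime $p$ is unramified in $K$ and $\alpha - \beta$ is a unit modulo $\mathfrak{p}^v$; consequently $x_s \equiv x_t \pmod{p^v}$ is equivalent to
\begin{equation*}
(x_1 - \beta)(\alpha^s - \alpha^t) \equiv (x_1 - \alpha)(\beta^s - \beta^t) \pmod{\mathfrak{p}^v}.
\end{equation*}
Using $\beta = -\alpha^{-1}$ together with the parity hypothesis $s \equiv t \pmod 2$, I would rewrite $\beta^s - \beta^t = -(-1)^t \alpha^{-(s+t)}(\alpha^s - \alpha^t)$, factor $\alpha^s - \alpha^t$ out of both sides, and collect the $x_1$ terms, which after a short rearrangement yields
\begin{equation*}
(\alpha^s - \alpha^t)\bigl[x_1(1 - z_{s,t}) - (\beta - \alpha z_{s,t})\bigr] \equiv 0 \pmod{\mathfrak{p}^v},
\end{equation*}
with $z_{s,t} = (-1)^{t+1}\alpha^{-(s+t)}$ as in the statement (because $-(-1)^t = (-1)^{t+1}$).

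The crucial step, and the one I expect to be the only non-routine part, is to show that $\alpha^s - \alpha^t = \alpha^s(1 - \alpha^{t-s})$ is a unit modulo $\mathfrak{p}^v$, so that it can be cancelled. Since $\alpha$ is itself a unit in $\mathcal{O}_K$ (as $\alpha\beta = -1$), it suffices to prove that $\alpha^{t-s} \not\equiv 1 \pmod{\mathfrak{p}}$. Here the full set of hypotheses on $s$ and $t$ enters: $t - s$ is a positive even integer with $t - s < 2n$, say $t - s = 2k$ with $1 \leq k < n$, and by the very definition of $n = \ord_{\mathfrak{p}}(\alpha^2)$ we must have $\alpha^{2k} \not\equiv 1 \pmod{\mathfrak{p}}$. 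This parity-meets-order bookkeeping is what prevents a spurious zero factor.

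Once cancelled, the problem reduces to
\begin{equation*}
x_1(1 - z_{s,t}) \equiv \beta - \alpha z_{s,t} \pmod{\mathfrak{p}^v},
\end{equation*}
which is precisely the situation analyzed at the end of Lemma~\ref{lem:zero-term}: if $1 - z_{s,t}$ were non-invertible modulo $\mathfrak{p}^v$, then $z_{s,t} \equiv 1 \pmod{\mathfrak{p}}$ would force $\alpha - \beta \equiv 0 \pmod{\mathfrak{p}}$, contradicting $p \nmid D$. Hence $1 - z_{s,t}$ must be a unit, and dividing through recovers exactly~\eqref{equ:coincidence:5}, establishing the equivalence in both directions.
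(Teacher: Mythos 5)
Your proposal is correct and follows essentially the same route as the paper's proof: Binet with $x_0=1$, cancellation of $\alpha^s-\alpha^t=\alpha^s(1-\alpha^{t-s})$ justified by the parity condition together with $t-s<2n$ and $n=\ord_{\mathfrak{p}}(\alpha^2)$, reduction to $x_1(1-z_{s,t})\equiv\beta-\alpha z_{s,t}\pmod{\mathfrak{p}^v}$, and the same contradiction argument (via $\alpha-\beta\equiv 0\pmod{\mathfrak{p}}$ versus $p\nmid D$) to show $1-z_{s,t}$ is invertible. The only cosmetic difference is that you factor $\alpha^s-\alpha^t$ out of both sides before cancelling, whereas the paper divides through and then identifies the resulting coefficient with $z_{s,t}$.
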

\begin{proof}
	Since $p \nmid D$, we get that $\alpha - \beta$ is invertible modulo $\mathfrak{p}^v$.
	Hence, employing Lemma~\ref{lem:binet}, we have that $x_s \equiv x_t \pmod{p^v}$ is equivalent to
	\begin{equation}\label{equ:coincidence:2}
		(x_1 - \beta)\alpha^s - (x_1 - \alpha) \beta^s \equiv (x_1 - \beta)\alpha^t - (x_1 - \alpha) \beta^t \pmod {\mathfrak{p}^v} .
	\end{equation}
	Since $0 \leq s < t < 2n$, $s \equiv t \pmod 2$, and $n := \ord_{\mathfrak{p}}(\alpha^2)$, we have that $1 - \alpha^{t-s}$ is invertible modulo $\mathfrak{p}^v$.
	Hence, it follows that \eqref{equ:coincidence:2} is equivalent to
	\begin{equation}\label{equ:coincidence:3}
		x_1 - \beta \equiv \frac{\beta^t(\beta^{s-t} - 1)}{\alpha^s (1 - \alpha^{t-s})} \, (x_1 - \alpha) \pmod {\mathfrak{p}^v} .
	\end{equation}
	Since $\beta = -\alpha^{-1}$ and $s \equiv t \pmod 2$, we get that
	\begin{equation*}
		\frac{\beta^t(\beta^{s-t} - 1)}{\alpha^s (1 - \alpha^{t-s})}
		= (-1)^{t + 1} \alpha^{-(s + t)} = z_{s,t} .
	\end{equation*}
	Therefore, we have that~\eqref{equ:coincidence:3} is equivalent to
	\begin{equation}\label{equ:coincidence:4}
		x_1 (1 - z_{s,t}) \equiv \beta - \alpha z_{s,t} \pmod {\mathfrak{p}^v} .
	\end{equation}
	We claim that if \eqref{equ:coincidence:4} holds then $1 - z_{s,t}$ is invertible modulo $\mathfrak{p}^v$.
    Indeed, if $1 - z_{s,t}$ is not invertible modulo $\mathfrak{p}^v$, then $z_{s,t} \equiv 1 \pmod{\mathfrak{p}}$, and so \eqref{equ:coincidence:4} implies that $\alpha - \beta \equiv 0 \pmod{\mathfrak{p}}$, which is a contradiction, since $p \nmid D$.
	Hence, we have that $1 - z_{s,t}$ is invertible modulo $\mathfrak{p}^v$.
	Consequently, we get that \eqref{equ:coincidence:4} is equivalent to \eqref{equ:coincidence:5} and $1 - z_{s,t}$ being invertible modulo $\mathfrak{p}^v$, as desired.
\end{proof}

\begin{lemma}\label{lem:parity}
	Let $n \geq 4$ be an even integer.
	Suppose that $p$ is a high primitive divisor of $\ell_n$ such that $(D \mid p) = -1$, and put $v := \nu_p(\ell_n)$.
	Then there exists $\bm{x} \in \mathcal{L}(f)$ such that $\tau(\bm{x}; p^v) = 2n$, $x_s \not\equiv x_t \pmod {p^v}$ for all integers $s, t$ with $0 < s < t < 2n$ and $s \equiv t \pmod 2$, and all the residues of $\bm{x}$ modulo $p^v$ are nonzero.
\end{lemma}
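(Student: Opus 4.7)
The plan is to fix $x_0 := 1$ and search for $x_1 \in \mathbb{Z}$ avoiding a small set of forbidden residues modulo $p^v$, then verify via Lemmas~\ref{lem:period}, \ref{lem:zero-term}, and \ref{lem:coincidence} that the resulting $\bm{x}$ meets every requirement. As a primitive divisor of $\ell_n$, $p$ satisfies $p \nmid a_1 D$; together with $(D \mid p) = -1$ this makes $f$ irreducible modulo $p$, so $f(x_1) = x_1^2 - a_1 x_1 - 1 \not\equiv 0 \pmod p$ for every integer $x_1$. The hypotheses of Lemma~\ref{lem:period} are therefore in force for any $x_1$, yielding $\tau(\bm{x}; p^v) = 2 \ord_{\mathfrak{p}^v}(\alpha^2)$. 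Since $p^v \mid \ell_n$ forces $\ord_{\mathfrak{p}^v}(\alpha^2) \mid n$ by Lemma~\ref{lem:prim-div-order-alpha2}, while the primitive-divisor property gives $\ord_{\mathfrak{p}}(\alpha^2) = n$, the order must equal $n$, and the period is automatically $2n$.

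I then read off the forbidden residues for $x_1$. By Lemmas~\ref{lem:zero-term} and \ref{lem:coincidence}, each event $x_s \equiv 0 \pmod{p^v}$ with $1 \leq s < 2n$, or $x_s \equiv x_t \pmod{p^v}$ with $0 < s < t < 2n$ and $s \equiv t \pmod 2$, is equivalent to a single congruence of the form $x_1 \equiv (\beta - \alpha u)/(1 - u) \pmod{\mathfrak{p}^v}$ with $u \in \{y_s, z_{s,t}\}$ and $1 - u$ invertible. Since $\beta - \alpha$ is invertible modulo $\mathfrak{p}^v$ (because $p \nmid D$), a direct computation shows the map $u \mapsto (\beta - \alpha u)/(1 - u)$ is injective on its domain, so the forbidden residues of $x_1$ are in bijection with the distinct forbidden $u$'s.

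The crux is the counting step: I claim every forbidden $u$ lies in the cyclic subgroup $\langle \alpha^2 \rangle \subset (\mathcal{O}_K/\mathfrak{p}^v)^*$ of order $n$. Lemma~\ref{lem:orders}\ref{ite:orders:2}, applicable because $\ord_{\mathfrak{p}^v}(\alpha^2) = n$ is even, forces $\alpha^n \equiv -1 \pmod{\mathfrak{p}^v}$. Therefore $y_s = (-\alpha^2)^{-s} \equiv \alpha^{(n-2)s}$, with even exponent, and $z_{s,t} \equiv \alpha^{(t+1)n - (s+t)}$, whose exponent is again even, using $n$ even and the parity hypothesis $s \equiv t \pmod 2$. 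So every forbidden $u$ belongs to $\langle \alpha^2 \rangle \setminus \{1\}$, producing at most $n - 1$ distinct forbidden residues for $x_1$ modulo $\mathfrak{p}^v$, and hence at most $n - 1$ inside the subring $\mathbb{Z}/p^v\mathbb{Z}$.

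Since $p$ is a \emph{high} primitive divisor we have $p^v > n$, so at least $p^v - (n - 1) \geq 2$ admissible residue classes in $\mathbb{Z}/p^v\mathbb{Z}$ remain; choosing $x_1 \in \mathbb{Z}$ in any one of them produces $\bm{x}$ with period $2n$, all terms non-zero modulo $p^v$, and no parity-matched coincidence inside one period. I expect the main obstacle to be exactly this counting step: a naive union of the zero and coincidence constraints has size $\Omega(n^2)$, which the hypothesis $p^v > n$ alone could not absorb, and the saving relies on exploiting both "$n$ even" (which places $-1$ inside $\langle \alpha^2 \rangle$) and the parity condition on $(s,t)$ (which places $\alpha^{-(s+t)}$ inside $\langle \alpha^2 \rangle$) to collapse the two families of constraints into a single cyclic subgroup of order $n$.
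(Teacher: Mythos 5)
Your proposal is correct and follows essentially the same route as the paper: fix $x_0 = 1$, observe that $(D \mid p) = -1$ makes Lemma~\ref{lem:period} apply unconditionally so the period is $2n$, and use the fact that $n$ even forces $\alpha^n \equiv -1 \pmod{\mathfrak{p}^v}$ to collapse all the forbidden values $y_s$ and $z_{s,t}$ into the cyclic group $\langle \alpha^2 \rangle$ of order $n$, so that $p^v > n$ leaves an admissible choice of $x_1$. The only cosmetic remark is that Lemma~\ref{lem:orders}\ref{ite:orders:2} literally gives $\ord_{\mathfrak{p}^v}(\alpha) = 2n$, from which $\alpha^n \equiv -1$ follows via Lemma~\ref{lem:sqrt-1}; the paper makes this one-line deduction explicit.
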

\begin{proof}
	Let $\mathfrak{p}$ be a prime ideal of $\mathcal{O}_K$ lying over $p$.
	Since $p$ is a primitive divisor of $\ell_n$ and $p^v \mid \ell_n$, from Lemma~\ref{lem:prim-div-order-alpha2} we have that $n = \ord_{\mathfrak{p}}(\alpha^2) = \ord_{\mathfrak{p}^v}(\alpha^2)$.

	We claim that there exists an integer $c$ that satisfies
	\begin{equation}\label{equ:excluded:1}
		c \not\equiv \frac{\beta - \alpha^{2k+1}}{1 - \alpha^{2k}} \pmod{\mathfrak{p}^v}
	\end{equation}
	for every integer $k \geq 0$ such that $1 - \alpha^{2k}$ is invertible modulo ${\mathfrak{p}^v}$.
	Indeed, we have $p^v$ choices for $c$ modulo $\mathfrak{p}^v$, while the right-hand side of \eqref{equ:excluded:1} takes at most $n = \ord_{\mathfrak{p}^v}(\alpha^2)$ values modulo $\mathfrak{p}^v$.
    Since $p$ is a high primitive divisor of $\ell_n$, we have that $p^v > n$ and so it is possible to choose a value of $c$ with the desired property.

	Let $\bm{x} \in \mathcal{L}(f)$ with $x_0 = 1$ and $x_1 = c$.
	Since $(D \mid p) = -1$, we have that $p \nmid D$ and that $f$ has no roots modulo $p$.
	Hence, also since $p$ is odd, we get that $p \nmid 2(x_1^2 - a_1 x_1 - 1)D$.
	Consequently, by Lemma~\ref{lem:period}, we get that $\tau(\bm{x}; p^v) = 2n$, as desired.

	Since $n = \ord_{\mathfrak{p}^v}(\alpha^2)$, we have that $\alpha^{2n} \equiv 1 \pmod{\mathfrak{p}^v}$.
    Consequently, by Lemma~\ref{lem:sqrt-1}, we get that $\alpha^n \equiv \pm 1 \pmod{\mathfrak{p}^v}$.
    In fact, since $n$ is even and $n = \ord_{\mathfrak{p}^v}(\alpha^2)$, it follows that $\alpha^n \equiv -1 \pmod{\mathfrak{p}^v}$.
    Hence, we have that $-1$ is equal to a power of $\alpha^2$ modulo $\mathfrak{p}^v$.

	Suppose that there exist integers $s, t$ such that $0 < s < t < 2n$, $s \equiv t \pmod 2$, and $x_s \equiv x_t \pmod {p^v}$.
	By Lemma~\ref{lem:coincidence}, we get that
	\begin{equation}\label{equ:excluded:2}
		c \equiv x_1 \equiv \frac{\beta - \alpha z_{s,t}}{1 - z_{s,t}} \pmod{\mathfrak{p}^v} ,
	\end{equation}
	where $z_{s,t} := (-1)^{t + 1} \alpha^{-(s + t)}$ is equal to a power of $\alpha^2$ modulo $\mathfrak{p}^v$.
	But then \eqref{equ:excluded:1} and \eqref{equ:excluded:2} are in contradiction.
	Therefore, it follows that $x_s \not\equiv x_t \pmod {p^v}$ for all integers $s, t$ with $0 < s < t < 2n$ and $s \equiv t \pmod 2$.

    Suppose that $x_s \equiv 0 \pmod{p^v}$ for some integer $s \geq 0$.
    Then, by Lemma~\ref{lem:zero-term}, we have that
	\begin{equation}\label{equ:excluded:3}
        c \equiv x_1 \equiv \frac{\beta - \alpha y_s}{1 - y_s} \pmod{\mathfrak{p}^v} ,
    \end{equation}
    where $y_s := (-\alpha^2)^{-s}$ is equal to a power of $\alpha^2$ modulo $\mathfrak{p}^v$.
	But then \eqref{equ:excluded:1} and \eqref{equ:excluded:3} are in contradiction.
    Therefore, it follows that $x_s \not\equiv 0 \pmod {p^v}$ for all integers $s \geq 0$.
\end{proof}

For every $\bm{x} \in \mathcal{L}(f)$ and for all integers $m, d \geq 1$ and $r$, let us define
\begin{equation*}
	\rho(\bm{x}; m, r, d) := \#\big\{x_n \bmod m : n\geq 0,\; n \equiv r \!\!\!\pmod d \big\} .
\end{equation*}
We need the following lemma.

\begin{lemma}\label{lem:combining-sequences}
	Let $m_1, m_2 \geq 1$ be coprime integers, let $\bm{x}^{(1)}, \bm{x}^{(2)} \in \mathcal{L}(f)$, and put $\tau_i := \tau(\bm{x}^{(i)}; m_i)$ for each $i \in \{1,2\}$.
	If $\rho(\bm{x}^{(2)}; m_2) = \tau_2$ then there exists $\bm{x} \in \mathcal{L}(f)$ such that $\bm{x} \equiv \bm{x}^{(i)} \pmod {m_i}$ for each $i \in \{1,2\}$, and
	\begin{equation*}
		\rho(\bm{x}; m_1 m_2) = \frac{\tau_2}{d} \sum_{r = 0}^{d-1} \rho(\bm{x}^{(1)}; m_1, r, d) ,
	\end{equation*}
	where $d := \gcd(\tau_1, \tau_2)$.
\end{lemma}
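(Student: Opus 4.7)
The existence of $\bm{x}$ is an immediate consequence of the Chinese Remainder Theorem. Since $\gcd(m_1, m_2) = 1$, I would pick integers $x_0, x_1$ with $x_0 \equiv x_0^{(i)} \pmod{m_i}$ and $x_1 \equiv x_1^{(i)} \pmod{m_i}$ for each $i \in \{1, 2\}$, and let $\bm{x} \in \mathcal{L}(f)$ be the linear recurrence with these initial values. Because the recurrence is integral and linear, reducing it modulo $m_i$ gives $\bm{x} \equiv \bm{x}^{(i)} \pmod{m_i}$.

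To compute $\rho(\bm{x}; m_1 m_2)$, I would use CRT again: the residue $x_n \bmod m_1 m_2$ is determined by the pair $(x_n^{(1)} \bmod m_1,\, x_n^{(2)} \bmod m_2)$, and two indices $n, n'$ give the same residue modulo $m_1 m_2$ exactly when $x_n^{(i)} \equiv x_{n'}^{(i)} \pmod{m_i}$ for both $i$. The key use of the hypothesis $\rho(\bm{x}^{(2)}; m_2) = \tau_2$ is that $x_0^{(2)}, \dots, x_{\tau_2 - 1}^{(2)}$ are pairwise distinct modulo $m_2$, hence $x_n^{(2)} \equiv x_{n'}^{(2)} \pmod{m_2}$ iff $n \equiv n' \pmod{\tau_2}$. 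Partitioning the indices by their class modulo $\tau_2$ therefore yields
\begin{equation*}
    \rho(\bm{x}; m_1 m_2) = \sum_{r = 0}^{\tau_2 - 1} \#\big\{x_n^{(1)} \bmod m_1 : n \geq 0,\; n \equiv r \!\!\!\pmod{\tau_2}\big\}.
\end{equation*}

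Next I would translate the inner set into one of the quantities $\rho(\bm{x}^{(1)}; m_1, r, d)$ defined just before the lemma. Since $\bm{x}^{(1)}$ has period $\tau_1$ modulo $m_1$, the value $x_n^{(1)} \bmod m_1$ depends only on $n \bmod \tau_1$; and as $n$ ranges over $n \equiv r \pmod{\tau_2}$, the residue $n \bmod \tau_1$ runs through the coset $r + \langle \tau_2 \rangle$ in $\mathbb{Z}/\tau_1\mathbb{Z}$, which is exactly $\{r' \in [0, \tau_1) : r' \equiv r \pmod{d}\}$ because the subgroup generated by $\tau_2$ in $\mathbb{Z}/\tau_1\mathbb{Z}$ equals $d\mathbb{Z}/\tau_1\mathbb{Z}$. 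Consequently the inner count is $\rho(\bm{x}^{(1)}; m_1, r, d)$, and this depends only on $r \bmod d$.

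To finish, I would note that in the sum over $r = 0, \dots, \tau_2 - 1$, each residue class modulo $d$ is hit exactly $\tau_2/d$ times (since $d \mid \tau_2$), so
\begin{equation*}
    \rho(\bm{x}; m_1 m_2) = \sum_{r = 0}^{\tau_2 - 1} \rho(\bm{x}^{(1)}; m_1, r, d) = \frac{\tau_2}{d} \sum_{r = 0}^{d - 1} \rho(\bm{x}^{(1)}; m_1, r, d),
\end{equation*}
which is the desired formula. I expect the only delicate step to be the coset calculation relating $\{n \bmod \tau_1 : n \equiv r \pmod{\tau_2}\}$ to an arithmetic progression modulo $d$; the rest is combinatorial bookkeeping and elementary CRT.
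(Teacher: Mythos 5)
Your proposal is correct. The construction of $\bm{x}$ (CRT on the initial values, then induction to get $\bm{x} \equiv \bm{x}^{(i)} \pmod{m_i}$) is exactly what the paper does. The difference is in the counting identity: the paper does not prove it at all, but simply invokes Lemma~6 of Dubickas and Novikas \cite{MR4125906}, whereas you give a self-contained argument. Your argument is sound: the hypothesis $\rho(\bm{x}^{(2)}; m_2) = \tau_2$ forces $x_0^{(2)}, \dots, x_{\tau_2-1}^{(2)}$ to be pairwise distinct modulo $m_2$ (this uses that sequences in $\mathcal{L}(f)$ are \emph{purely} periodic modulo every $m$, which holds here since the constant term of $f$ is $-1$ — worth stating explicitly, since for merely ultimately periodic sequences the inference would fail); the CRT splitting then reduces the count to a sum over residues $r$ modulo $\tau_2$; and the coset computation $\{n \bmod \tau_1 : n \equiv r \pmod{\tau_2}\} = \{r' \in [0,\tau_1) : r' \equiv r \pmod{d}\}$ is the standard fact that $\tau_2$ generates $d\mathbb{Z}/\tau_1\mathbb{Z}$. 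What your route buys is independence from the external reference; what the paper's route buys is brevity. Either is acceptable.
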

\begin{proof}
	Since $m_1, m_2$ are coprime, we can pick $\bm{x} \in \mathcal{L}(f)$ such that $x_j \equiv x_j^{(i)} \pmod {m_i}$ for each $i \in \{1, 2\}$ and $j \in \{0,1\}$.
	Then, it follows easily by induction that $x_j \equiv x_j^{(i)} \pmod {m_i}$ for each $i \in \{1, 2\}$ and for every integer $j \geq 0$.
	At this point, the rest of the claim is a consequence of \cite[Lemma~6]{MR4125906}.
\end{proof}

\section{Proof of Theorem~\ref{thm:main}}\label{sec:proof-main}

We collected in Table~\ref{tab:special-cases} some values of $\tau(\bm{x}; m)$ and $\rho(\bm{x}; m)$, for $\bm{x} \in \mathcal{L}(f)$ and $m \in \mathbb{Z}^+$, that are needed for the proof.
We begin with the following lemma.

\begin{lemma}\label{lem:case-n-4}
    There exist $\bm{x} \in \mathcal{L}(f)$ and $m \in \mathbb{Z}^+$ such that $\rho(\bm{x}; m) = 4$ and all the residues of $\bm{x}$ modulo $m$ are nonzero.
    In particular, we have that $4 \in \mathcal{R}(f)$.
\end{lemma}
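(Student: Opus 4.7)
The plan is to exhibit a fully explicit construction. Since we want $\rho(\bm{x}; m) = 4$ with all residues nonzero, the cleanest route is to force $\bm{x}$ to be purely periodic modulo some $m$ with period exactly $4$, cycling through four distinct nonzero residues. For this, I would exploit the ramification of $f$ at a prime dividing the discriminant: for any odd prime $p \mid D = a_1^2 + 4$, the characteristic polynomial factors as $f(X) \equiv (X - r)^2 \pmod{p}$, where $r$ is the residue of $a_1 \cdot 2^{-1}$ modulo $p$.

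The construction is then to pick such a $p$, take $\bm{x} \in \mathcal{L}(f)$ with $x_0 = 1$ and $x_1 \equiv r \pmod{p}$, and prove by induction that $x_n \equiv r^n \pmod{p}$ for all $n \geq 0$. The inductive step is the identity
\begin{equation*}
    a_1 r^{n-1} + r^{n-2} = r^{n-2}(a_1 r + 1) = r^{n-2} \cdot r^2 = r^n \pmod{p},
\end{equation*}
which uses $r^2 - a_1 r - 1 \equiv 0 \pmod p$. The decisive calculation is that $r^2 \equiv a_1^2 / 4 \equiv -4/4 \equiv -1 \pmod{p}$, since $p \mid D$; hence $r$ has multiplicative order exactly $4$ modulo $p$, and the set of residues of $\bm{x}$ modulo $p$ is $\{1, r, -1, -r\}$.

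I would then verify that these four residues are pairwise distinct and nonzero. Nonzeroness is clear, since $r$ is a unit modulo $p$ (its square is $-1$). A collision among $\{1, r, -1, -r\}$ could occur only if $p = 2$ (merging $1 \equiv -1$) or $p \mid a_1$ (merging $r \equiv 0$), both of which are excluded: the first because $p$ is odd, and the second because $p \mid a_1$ combined with $p \mid D$ forces $p \mid D - a_1^2 = 4$. Taking $m := p$ then gives $\rho(\bm{x}; m) = 4$ with all residues nonzero.

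The only obstacle is guaranteeing the existence of an odd prime factor of $D$. When $a_1$ is odd, $D$ itself is odd and $\geq 5$, so any prime factor works. When $a_1$ is even, write $a_1 = 2b$ with $b \geq 1$, so $D = 4(b^2 + 1)$; the integer $b^2 + 1 \geq 2$ has no odd prime factor only when it is a power of $2$, and a short reduction modulo $4$ forces $b = 1$, i.e., $a_1 = 2$. This single exceptional case I would handle separately by a direct check drawn from Table~\ref{tab:special-cases}: for instance, with $a_1 = 2$, taking $m = 5$ and $x_0 = x_1 = 1$ yields a sequence of period $12$ whose residues modulo $5$ are exactly $\{1, 2, 3, 4\}$, all nonzero. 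This completes the proof.
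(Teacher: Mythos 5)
Your proposal is correct, and it takes a genuinely different route from the paper. The paper's proof is a direct computation with the modulus $m = 2a_1$: for odd $a_1 \geq 3$ it takes $x_0 = 1$, $x_1 = 2$ and checks that the sequence cycles with period $6$ through the four nonzero residues $1, 2, a_1+2, a_1+1$, while for even $a_1 \geq 4$ it takes $x_0 = 1$, $x_1 = 3$ and gets period $4$ with residues $1, 3, a_1+1, a_1+3$; the cases $a_1 \in \{1,2\}$ are read off from Table~\ref{tab:special-cases}. You instead exploit the ramified prime: an odd $p \mid D$ gives a double root $r \equiv a_1 \cdot 2^{-1}$ of $f$ with $r^2 \equiv -1 \pmod p$, hence a geometric solution $x_n \equiv r^n$ running through the four distinct nonzero residues $1, r, -1, -r$. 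All the steps check out — the identity $f(r) \equiv -D/4 \equiv 0$, the order-$4$ computation, the distinctness (which follows at once from $\ord_p(r)=4$), and the reduction showing that only $a_1 = 2$ lacks an odd prime factor of $D$, which you correctly dispatch via row~13 of the table. Your argument is arguably more uniform and conceptual: a single mechanism (an order-$4$ unit coming from $a_1^2 \equiv -4 \pmod p$) covers every $a_1 \neq 2$, including $a_1 = 1$, where it in fact reproduces row~2 of the table ($p=5$, $r=3$). What the paper's version buys in exchange is that the modulus $2a_1$ and the initial values are written down explicitly with no need to locate a prime factor of $D$, and the verification is a finite hand computation independent of any factorization; both proofs remain fully constructive and need one tabulated exception.
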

\begin{proof}
    If $a_1 = 1$ or $a_1 = 2$, then the claim follows from rows 2 or 13 of Table~\ref{tab:special-cases}, respectively.

    If $a_1$ is odd and $a_1 \geq 3$, then pick $\bm{x} \in \mathcal{L}(f)$ with $x_0 = 1$, $x_1 = 2$, and let $m := 2 a_1$.
    Since $2 a_1 \equiv 0 \pmod m$ and $a_1^2 \equiv a_1 \pmod m$, we get that the first terms of $\bm{x}$ are congruent to
    \begin{equation*}
        1,\, 2,\, 1,\, a_1 + 2,\, a_1 + 1,\, a_1 + 2,\, 1,\, 2, \, \dots \pmod m .
    \end{equation*}
    Noting that $m \geq 6$ and $a_1 \not\equiv -2, -1, 0, 1 \pmod m$, it follows easily that $\tau(\bm{x}; m) = 6$,  $\rho(\bm{x}; m) = 4$ and that all the residues of $\bm{x}$ modulo $m$ are nonzero.

    If $a_1$ is even and $a_1 \geq 4$, then pick $\bm{x} \in \mathcal{L}(f)$ with $x_0 = 1$, $x_1 = 3$, and let $m := 2 a_1$.
    Since $a_1^2 \equiv 2 a_1 \equiv 0 \pmod m$, we get that the first terms of $\bm{x}$ are congruent to
    \begin{equation*}
        1,\, 3,\, a_1 + 1,\, a_1 + 3,\, 1,\, 3,\, \dots \pmod m .
    \end{equation*}
    Noting that $m \geq 8$ and $a_1 \not\equiv -3,-2,-1,0,2 \pmod m$, it follows easily that $\tau(\bm{x}; m) = \rho(\bm{x}; m) = 4$ and that all the residues of $\bm{x}$ modulo $m$ are nonzero.
\end{proof}

We have to prove that $\mathcal{R}(f) = \mathbb{Z}^+$.
First, we prove that $\{n, 2n\} \subseteq \mathcal{R}(f)$ for every odd integer $n \geq 1$.
We have that $1 \in \mathcal{R}(f)$, since $\rho(\bm{x}; 1) = 1$ for every $\bm{x} \in \mathcal{L}(f)$; and $2 \in \mathcal{R}(f)$, since $\rho(\bm{x}; 2) = 2$ for every $\bm{x} \in \mathcal{L}(f)$ with $x_0 \not\equiv x_1 \pmod 2$.
If $a_1 = 1$, or $a_1 = 3$, then $\{3, 6\} \subseteq \mathcal{R}(f)$ by rows 1 and 3, or 17 and 18, of Table~\ref{tab:special-cases}, respectively.
Hence, assume that $n \geq 3$ is an odd integer, and $a_1 \notin \{1, 3\}$ or $n \neq 3$.
By Lemma~\ref{lem:existence-of-primitive-divisor}, we have that $\ell_n$ has an odd primitive divisor $p$.
Moreover, since $n$ is odd, from Lemma~\ref{lem:legendre-D-p-equal-1} it follows that $(D \mid p) = 1$.
Therefore, from Lemma~\ref{lem:case-n-odd-D-p-equal-1} we obtain that $\{n, 2n\} \subseteq \mathcal{R}(f)$.

It remains to prove that $2n \in \mathcal{R}(f)$ for every even integer $n \geq 2$.
By Lemma~\ref{lem:case-n-4}, we have that $4 \in \mathcal{R}(f)$.
Hence, we can assume that $n \geq 4$.

If $a_1 = 1$ and $n \in \{4, 6, 8, 10, 12, 14, 18, 24\}$, or $a_1 = 2$ and $n \in \{4, 6, 12\}$, or $a_1 = 3$ and $n = 6$, then $2n \in \mathcal{R}(f)$ by rows 4--11, 14--16, or 19 of Table~\ref{tab:special-cases}, respectively.

Hence, assume that $n \geq 4$ is an even integer, and that $a_1 = 1$ and $n \notin \{4, 6, 8, 10,$ $12, 14, 18, 24\}$, or $a_1 = 2$ and $n \notin \{4, 6, 12\}$, or $a_1 = 3$ and $n \neq 6$, or $a_1 \geq 4$.
Then, by Lemma~\ref{lem:existence-of-primitive-divisor-high-power}, we have that $\ell_n$ has a high primitive divisor $p$.

If $(D \mid p) = 1$ then Lemma~\ref{lem:case-n-odd-D-p-equal-1} yields that $2n \in \mathcal{R}(f)$, as desired.
Hence, suppose that $(D \mid p) = -1$.
Thanks to Lemma~\ref{lem:parity}, there exists $\bm{x}^{(1)} \in \mathcal{L}(f)$ such that $\tau(\bm{x}^{(1)}; m_1) = 2n$ and $\rho(\bm{x}^{(1)}; m_1, r, 2) = n$  for each $r \in \{0, 1\}$, where $m_1 := p^{\nu_p(\ell_n)}$.
Note that, since $n$ is even, we also get that $\rho(\bm{x}^{(1)}; m_1, r, 4) = n / 2$ for each $r \in \{0, 1, 2, 3\}$.
We define $\bm{x}^{(2)} \in \mathcal{L}(f)$ and $m_2 \in \mathbb{Z}^+$ as follows.
If $a_1 = 1$ then we pick $\bm{x}^{(2)}$ and $m_2$ from rows 2 of Table~\ref{tab:special-cases}.
If $a_1 \geq 2$ then we let $x_0^{(2)} := 0$, $x_1^{(2)} := 1$, and put $m_2 := a_1$.
It follows easily that $\tau(\bm{x}^{(2)}; m_2) = \rho(\bm{x}^{(2)}; m_2) \in \{2, 4\}$.
Since $p$ is a primitive divisor of $\ell_n$, we have that $p \nmid a_1 D$.
Hence, we get that $m_1, m_2$ are coprime integers.
(Note that $m_2 = D = 5$ if $a_1 = 1$.)
Put $\tau_i := \tau(\bm{x}^{(i)}; m_i)$ for each $i \in \{1, 2\}$, and let $d := \gcd(\tau_1, \tau_2)$.
Note that, since $n$ is even, we have that $d = \tau_2 \in \{2, 4\}$.
Therefore, applying Lemma~\ref{lem:combining-sequences} we get that there exists $\bm{x} \in \mathcal{L}(f)$ such that
\begin{equation*}
	\rho(\bm{x}; m_1 m_2) = \frac{\tau_2}{d} \sum_{r = 0}^{d-1} \rho(\bm{x}^{(1)}; m_1, r, d) = 1 \cdot \sum_{r = 0}^{d-1} \frac{2n}{d} = 2n ,
\end{equation*}
so that $2n \in \mathcal{R}(f)$, as desired.
The proof that $\mathcal{R}(f) = \mathbb{Z}^+$ is complete.

At this point, note that for each integer $n \geq 4$, and for each integer $n \geq 3$ if $a_1 \geq 2$, the sequence $\bm{x} \in \mathcal{L}(f)$ and the modulo $m$ that we constructed so that $\rho(\bm{x}; m) = n$ have the additional property that all the residues of $\bm{x}$ modulo $m$ are nonzero.
This follows from Table~\ref{tab:special-cases}, Lemma~\ref{lem:case-n-odd-D-p-equal-1}, Lemma~\ref{lem:case-n-4}, and Lemma~\ref{lem:parity}--Lemma~\ref{lem:combining-sequences} (note that $\bm{x}$ has no zero modulo $m_1 m_2$ since $\bm{x}^{(1)}$ has no zero modulo $m_1$).
Moreover, if $a_1 \geq 2$, then picking $\bm{x} \in \mathcal{L}(f)$ with $x_0 = 1$, $x_1 = 1$, and $m := a_1$, we get that $\rho(\bm{x}; m) = 1$ and all the residues of $\bm{x}$ modulo $m$ are nonzero.
If $a_1 = 2$ or $a_1 \geq 3$, then picking $\bm{x}$ and $m$ as in row 12 of Table~\ref{tab:special-cases}, or taking $x_0 = 1$, $x_1 = 2$, and $m = a_1$, we get that $\rho(\bm{x}; m) = 2$ and all the residues of $\bm{x}$ modulo $m$ are nonzero.

It remains to prove that if $a_1 = 1$ and $n \in \{1, 2, 3\}$ then there exist no $\bm{x} \in \mathcal{L}(f)$ and $m \in \mathbb{Z}^+$ such that $\rho(\bm{x}; m) = n$ and all the residues of $\bm{x}$ modulo $m$ are nonzero.
This is done in the last paragraph of \cite[p.~122]{MR4125906}.

The proof is complete.

\section{Proof of Corollary~\ref{cor:fractional-parts}}
\label{sec:proof-fractional-parts}

If $a_1 = 1$ then the result is in fact~\cite[Corollary~2]{MR4125906}.
Hence, assume that $a_1 \geq 2$.

We order $\alpha$ and $\beta$ so that $\alpha = \big(a_1 + \sqrt{D}\,\big) / 2$ and $\beta = \big(a_1 - \sqrt{D}\,\big) / 2$.
In particular, note that $\alpha > 1$ and $-1 < \beta < 0$.

Let $k \geq 1$ be an integer.
By Theorem~\ref{thm:main}, there exist $\bm{x} \in \mathcal{L}(f)$ and $m \in \mathbb{Z}^+$ such that $\rho(\bm{x}; m) = k$ and $x_n \not\equiv 0 \pmod m$ for every integer $n \geq 0$.
Moreover, by Lemma~\ref{lem:binet}, we have that $x_n = c_1 \alpha^n - c_2 \beta^n$ for all integers $n \geq 0$, where
\begin{equation*}
    c_1 := \frac{x_1 - \beta x_0}{\alpha - \beta} \quad \text{ and } \quad c_2 := \frac{x_1 - \alpha x_0}{\alpha - \beta} .
\end{equation*}
By eventually replacing $\bm{x}$ with $-\bm{x} := (-x_n)_{n \geq 0}$, we can assume that $c_1 > 0$.
Furthermore, note that $c_2 \neq 0$.
Let $\xi := c_1 / m$, so that $\xi \in \mathbb{Q}(\alpha)$.
Then, we have that
\begin{equation}\label{equ:xi-alpha-n}
    \xi \alpha^n - \frac{x_n}{m} = \frac{c_2 \beta^n}{m} \to 0 ,
\end{equation}
as $n \to +\infty$, since $|\beta| < 1$.
Let $r_1, \dots, r_k \in \mathbb{Z}$, with $0 < r_1 < \dots < r_k < m$, be the residues of $\bm{x}$ modulo $m$.
From~\eqref{equ:xi-alpha-n} it follows easily that the set of limit points of $\big(\!\fracpart(\xi \alpha^n)\big)_{n \geq 0}$ is equal to $\{r_1 / m, \dots, r_k / m\}$.
Hence, we get that $\big(\!\fracpart(\xi \alpha^n)\big)_{n \geq 0}$ has exactly $k$ limit points.

The proof is complete.

\begin{table}[h]
	\begin{center}
		\begin{tabular}{cccccccc}
			\toprule
			row n. & $a_1$ & $x_0$ & $x_1$ & $m$ & $\tau(\bm{x}; m)$ & $\rho(\bm{x}; m)$ \\
			\midrule
			$1$ & $1$ & $0$ & $1$ & $3$ & $8$ & $3$ &  \\
			$2$ & $1$ & $1$ & $3$ & $5$ & $4$ & $4$ & $*$ \\
			$3$ & $1$ & $1$ & $3$ & $8$ & $12$ & $6$ & $*$ \\
			$4$ & $1$ & $1$ & $3$ & $10$ & $12$ & $8$ & $*$ \\
			$5$ & $1$ & $1$ & $3$ & $13$ & $28$ & $12$ & $*$ \\
			$6$ & $1$ & $1$ & $3$ & $17$ & $36$ & $16$ & $*$ \\
			$7$ & $1$ & $1$ & $3$ & $28$ & $48$ & $20$ & $*$ \\
			$8$ & $1$ & $1$ & $3$ & $26$ & $84$ & $24$ & $*$ \\
			$9$ & $1$ & $1$ & $3$ & $56$ & $48$ & $28$ & $*$ \\
			$10$ & $1$ & $1$ & $3$ & $52$ & $84$ & $36$ & $*$ \\
			$11$ & $1$ & $1$ & $3$ & $78$ & $168$ & $48$ & $*$ \\
			\midrule
			$12$ & $2$ & $1$ & $1$ & $4$ & $4$ & $2$ & $*$ \\
			$13$ & $2$ & $1$ & $1$ & $5$ & $12$ & $4$ & $*$ \\
			$14$ & $2$ & $1$ & $1$ & $28$ & $12$ & $8$ & $*$ \\
			$15$ & $2$ & $1$ & $1$ & $13$ & $28$ & $12$ & $*$ \\
			$16$ & $2$ & $1$ & $1$ & $39$ & $56$ & $24$ & $*$ \\
			\midrule
			$17$ & $3$ & $1$ & $1$ & $9$ & $6$ & $3$ & $*$ \\
			$18$ & $3$ & $1$ & $1$ & $8$ & $12$ & $6$ & $*$ \\
			$19$ & $3$ & $1$ & $1$ & $17$ & $16$ & $12$ & $*$ \\
			\bottomrule
		\end{tabular}
        \caption{Values of $\tau(\bm{x}; m)$ and $\rho(\bm{x}; m)$ for $\bm{x} \in \mathcal{L}(X^2 - a_1 X - 1)$ with initial values $x_0, x_1$.
        The symbol $*$ means that all the residues of $\bm{x}$ modulo $m$ are nonzero.}\label{tab:special-cases}
    \end{center}
\end{table}

\FloatBarrier

\bibliographystyle{amsplain-no-bysame}
\bibliography{main}

\end{document}